\begin{document}

%

%

\title{On $(\delta,f)$-derivations and Jordan $(\delta,f)$-derivations on modules}

\author{\footnotesize Gusti Ayu Dwi Yanti $^{\dagger,+}$ and Indah Emilia Wijayanti $^{\dagger,}$\footnote{Corresponding author}} 

\address{$^{\dagger}$Department of Mathematics\\
Faculty of Mathematics and Natural Sciences\\
Universitas Gadjah Mada\\
Yogyakarta, Indonesia\\ 
$^+$gustiayudwiyanti2001@mail.ugm.ac.id, gustiayu9d@gmail.com\\
$^*$ind$\_$wijayanti@ugm.ac.id}

\maketitle

\begin{abstract}
Let $R$ be a ring with identity, $M,N$ right modules over $R$. An additive mapping $\delta$ from $R$ to $R$ is called derivation on ring $R$ if it satisfies the Leibniz condition. If $\delta$ is a derivation on $R$ and $f:M \rightarrow N$ is a module homomorphism over $R$, then an additive mapping $d:M \rightarrow N$ is called a $(\delta,f)$-derivation if it satisfies $d(xa)=d(x)a+f(x)\delta(a)$ for all $x \in M$ and $a \in R$. An additive mapping $\delta: R \rightarrow R$ is called Jordan derivation on ring $R$ if $\delta(x^2)=\delta(x)x+x\delta(x)$ for all $x \in R$, which is the generalization of derivation  This paper presents generalization of Posner's First Theorem of $(\delta,f)$-derivation on $2$-torsion prime modules. It also provides a generalization of some results in case of $2$-torsion free prime modules from ring situation. Moreover, we introduce a Jordan $(\delta,f)$-derivation on modules and prove that every Jordan $(\delta,f)$-derivation on modules is a $(\delta,f)$-derivation on modules.
\end{abstract}

\keywords{Leibniz condition; Jordan $(\delta,f)$-derivation; Jordan product; Posner's First Theorem.}

\ccode{2020 Mathematics Subject Classification: 47B47, 13F20, 16S34}

\section{Introduction}	

Throughout this paper, the ring means an associative ring with identity, and all modules are considered to be unital. Jacobson \cite{jacobson} defined derivations on rings using Leibniz rule for the multiplication of two elements within the ring. An additive mapping $\delta$ is called a derivation on ring $R$ if $\delta (ab)=\delta (a) b + a \delta (b)$ for all $a,b \in R$. In addition, several types of derivation are presented in Ali et al \cite{shakir}.

Some results from Posner's studies in \cite{posner} have motivated some previous authors to make the generalization. Posner proved that in a prime ring with characteristic not equal to two, if the composition of two derivations results in another derivation, then at least one of the original derivations must be zero. Esslamzadeh and Ghahramani \cite{esslamzadeh} investigated generalized derivations and a special case of Posner's first theorem. They showed that if the composition $\tau$ is a generalized derivations relative to derivations $\delta$ and $\phi$ is an endomorphism of a prime module $M$ over an algebra $\mathcal{A}$, such that $\tau \circ \phi$ is an endomorphism of a prime module $M$ over $\mathcal{A}$, then either $\tau$ is an endomorphism of the prime module $M$ over $\mathcal{A}$ or $\phi$ must be zero.

Ghahramani et al. \cite{ghahramani} studied a generalization of Posner's first theorem to generalized derivations on $2$-torsion free prime modules and proved that the composition of two generalized derivations relative to the composition of two additive maps if and only if one of them is zero, of if both are nonzero, then they must be endomorphisms of the module $M$. Moreover, motivated by Posner's first Theorem, Creedon \cite{creedon} showed that if $\delta_1$ and $\delta_2$ are derivations on a ring $R$ such that $\delta_1 \delta_2 (R) \subseteq P$, where $P$ is a prime ideal and the characteristic of the quotient ring $R/P$ is not $2$, then it must follows that either $\delta_1(R) \subseteq P$ or $\delta_2 (R) \subseteq P$.

Furthermore, Bland \cite{bland} has defined the concept of $(\delta,f)$-derivation on a module as following:
\begin{definition}\label{def1.1.1} 
    Let $R$ be a ring with identity, and $M,N$ be right modules over ring $R$. Suppose $\delta$ is a derivation on $R$, and $f: M \rightarrow N$ is a module homomorphism over $R$. Then an additive map $d: M \rightarrow N$ is called a $(\delta,f)$-derivation if it satisfies $d(xa)=d(x)a+f(x)\delta(a)$ for all $x \in M$ and $a \in R$.
\end{definition}

We refer to $(\delta,f)$-derivation simply as $f$-derivation, with the understanding that $\delta$ is fixed. In a special case, i.e.  if $f : M \rightarrow M$, then we call the $(\delta,f)$-derivation $d: M \rightarrow M$ as $f$-derivation on $M$. In particular, if $f=id_M$ then $d$ becomes a derivation on $M$. If $f=0$, then $d$ reduces to a module homomorphism over $R$. Bland's research on $(\delta,f)$-derivation focuses on the localization and colocalization functor in the category of unitary right module $M$ over ring $R$. Following the notion of $(\delta,f)$-derivation by Bland, Fitriani et al \cite{fitriani} observed $(\delta,f)$-derivation on polynomial modules $M[x]$ and applied the results to semigroup ring $R[S]$ and semigroup module $M[S]$. 

On the other hand, Herstein \cite{heirstein} observed the relationship between associative rings and the Jordan product and defined a Jordan derivation. An additive mapping is called Jordan derivation on $R$ if $\delta (a^2) = \delta (a)a+a \delta (a)$ for all $a \in R$. Jordan derivation is the generalization of derivation because if $\delta$ is a derivations, then $\delta$ is also a Jordan derivation. But the converse is not always the case. Some authors study further Jordan derivation, for example Cusack \cite{cusack} who observed Jordan derivation on rings with characteristic not equal to two and non zero divisor commutator, then every Jordan derivation is a derivation. Moreover, Li \cite{li} defined Jordan generalized derivation from associative algebra $\mathcal{A}$ over commutative ring $R$ to bimodule $\mathcal{M}$ over $\mathcal{A}$ relative to a linear map from algebra $\mathcal{A}$ over $R$ to bimodule $\mathcal{M}$ over $\mathcal{A}$.

In this paper, we investigate the generalization of Posner's first Theorem of $(\delta,f)$-derivation on $2$-torsion prime modules. In addition, we provide a generalization of Creedon's Theorem of $(\delta,f)$-derivation on the $2$-torsion free prime modules. We induce Jordan $(\delta,f)$-derivation on module is a generalization of a $(\delta,f)$-derivation. Finally, we prove that every Jordan $(\delta,f)$-derivation on modules is an $(\delta,f)$-derivation on modules.

\section{On $(\delta,f)$-derivations on prime modules}

In this section, we study the relationship between the primeness of rings and modules and the generalization of Posner's theorem using some Creedon's result of $(\delta,f)$-derivation \cite{creedon}. 

A ring $R$ is said to be prime if for all $x,y \in R$ if $xRy=0$, then it implies that either $x=0$ or $y=0$. Assume that $M$ is a right module over ring $R$. The set 
\begin{align*}
    Ann_R^r(M)=\{r \in R \mid \text{ for all } m \in M, mr=0_R\}
\end{align*} 
is called the right annihilator of $M$ in ring $R$. For any $K$ a submodule of $M$, We define the quotient 
\begin{align*}
    (K:M)=\{ r \in R \mid \text{ for all } m \in M, mr \in K\} = Ann_R^r(M/K)
\end{align*}
which is an ideal of ring $R$. In case $K=0$, then $(0:M)=Ann_R^r(M)$. A proper submodule $K$ of $M$ is called a prime submodule, if for all $x \in R$ and $m \in M, mRx \subseteq K$ implies that either $m \in K$ or $x \in (K:M)$. A module $M$ is called a prime module if the zero submodule $0$ is a prime submodule. We refer to \cite{indah} for more explanation of the primeness of submodules.

Module $M$ is said to be faithful if $Ann_R^r (M) = \{ r \in R \mid \text{ for all } m \in M, mr = 0_R \} = 0_R$. Let $\overline{R}=R/Ann_R^r(M)$. We recall \cite[Proposition 2.2 (i)]{indah}, that is if $M$ is prime, then $\overline{R}$ is a prime ring. So, if $Ann_R^r(M)=0$ then $R$ is a prime ring. Moreover, $M$ can be regarded as a module over the ring $\overline{R}$. Module $M$ is called $2$-torsion free if for all $m \in M$ if $2m=0$ implies that $m=0$. If $M$ is a $2$-torsion free, then the factor ring $\overline{R}$ is also $2$-torsion free.

Now we consider the following example. 
\begin{example}\label{example2.1}
    Given the polynomial ring $\mathbb{Z}[x]$ and module  $\mathbb{Z}[x] \times \mathbb{Z}[x]$ over $\mathbb{Z}[x]$. We define derivations $\delta_1,\delta_2:\mathbb{Z}[x] \rightarrow \mathbb{Z}[x]$ by $\delta_1(a(x))=\delta(a_0+a_1x+a_2x^2+\cdots +a_nx^n)=a_1+2a_2x+\cdots+na_nx^{n-1}=a'(x)$ and $\delta_2(a(x))=qa'(x)$ for every $a(x)=a_0+a_1x+a_2x^2+\cdots+a_nx^n \in \mathbb{Z}[x]$, where $q$ is a fixed element in $ \in \mathbb{Z}$. Moreover, we define module homomorphisms $f_1,f_2:\mathbb{Z}[x] \times \mathbb{Z}[x] \rightarrow \mathbb{Z}[x] \times \mathbb{Z}[x]$ by $f_1 \left( \begin{bmatrix}
        a(x)\\
        b(x)
    \end{bmatrix} \right) = \begin{bmatrix}
        a(x)\\
        b(x)
    \end{bmatrix}$ and $f_2 \left( \begin{bmatrix}
        a(x)\\
        b(x)
    \end{bmatrix} \right) =\begin{bmatrix}
        \frac{p}{q} a(x)\\
        \frac{1}{q} b(x)
    \end{bmatrix}$ for every $\begin{bmatrix}
        a(x)\\
        b(x)
    \end{bmatrix} \in \mathbb{Z}[x] \times \mathbb{Z}[x]$, where $p,q$ are fixed elements in $ \mathbb{Z}$. Based on these conditions we define $(\delta_i,f_i)$-derivation for $i=1,2$ as following : $d_i : \mathbb{Z}[x] \times \mathbb{Z}[x] \rightarrow \mathbb{Z}[x] \times \mathbb{Z}[x]$, where $d_1 \left( \begin{bmatrix}
        a(x)\\
        b(x)
    \end{bmatrix} \right) = \begin{bmatrix}
        a'(x)\\
        b'(x)
    \end{bmatrix}$ and $d_2 \left( \begin{bmatrix}
        a(x)\\
        b(x)
    \end{bmatrix} \right) = \begin{bmatrix}
        pa'(x) + a(x)\\
        b(x)
    \end{bmatrix}$ for every $\begin{bmatrix}
        a(x)\\
        b(x)
    \end{bmatrix} \in \mathbb{Z}[x] \times \mathbb{Z}[x]$ for some fixed element $p \in \mathbb{Z}$. 
    
     Let $\begin{bmatrix}
        a(x)\\
        b(x)
    \end{bmatrix} \in \mathbb{Z}[x] \times \mathbb{Z}[x]$ and $c(x) \in \mathbb{Z}[x]$. We obtain
    \begin{align*}
        d_1d_2 \left( \begin{bmatrix}
        a(x)\\
        b(x)
    \end{bmatrix} c(x) \right) &= d_1d_2 \left( \begin{bmatrix}
        a(x)c(x)\\
        b(x)c(x)
    \end{bmatrix} \right) \\
    &= d_1 \left( \begin{bmatrix}
        p(a'(x)c(x)+a(x)c'(x))+a(x)c(x)\\
        b(x)c(x)
    \end{bmatrix} \right)\\
    &= \begin{bmatrix}
        p(a''(x)c(x)+2a'(x)c'(x)+a(x)c''(x))+a'(x)c(x)+a(x)c'(x)\\
        b'(x)c(x)+b(x)c'(x)
    \end{bmatrix}
    \end{align*}
    and
    \begin{align*}
        &d_1d_2 \left(\begin{bmatrix}
        a(x)\\
        b(x)
    \end{bmatrix}\right) c(x) + f_1f_2 \left(\begin{bmatrix}
        a(x)\\
        b(x)
    \end{bmatrix} \right) \delta_1\delta_2(c(x))\\
    &= d_1 \left( \begin{bmatrix}
        pa'(x)+a(x)\\
        b(x)
    \end{bmatrix} \right)c(x) +f_1 \left( \begin{bmatrix}
        \frac{p}{q} a(x)\\
        \frac{1}{q} b(x)
    \end{bmatrix} \right) \delta_1(qc'(x))\\
    &= \begin{bmatrix}
        pa''(x)+a'(x)\\
        b'(x)
    \end{bmatrix} c(x) + \begin{bmatrix}
        \frac{p}{q} a(x)\\
        \frac{1}{q} b(x)
    \end{bmatrix} qc''(x)\\
    &= \begin{bmatrix}
        pa''(x)c(x)+a'(x)c(x)+pa(x)c'(x)\\
        b'(x)c(x) + bc''(x)
    \end{bmatrix}.
    \end{align*}
    Because $d_1d_2 \left( \begin{bmatrix}
        a(x)\\
        b(x)
    \end{bmatrix} c(x) \right) \ne d_1d_2 \left(\begin{bmatrix}
        a(x)\\
        b(x)
    \end{bmatrix}\right) c(x) + f_1f_2 \left(\begin{bmatrix}
        a(x)\\
        b(x)
    \end{bmatrix} \right) \delta_1\delta_2(c(x))$, we conclude that $d_1d_2$ is not an $(\delta_1\delta_2,f_1f_2)$-derivation.
\end{example}
In Example \ref{example2.1} we show that there are non-zero derivations $d_1$ and $ d_2$ on $\mathbb{Z}[x] \times \mathbb{Z}[x]$ in which the composition $d_1d_2$ is not a $(\delta_1\delta_2,f_1f_2)$-derivation.

Based on the fact in the Example \ref{example2.1}, we provide the necessary and sufficient conditions for $d_1$ and $d_2$ to obtain a $(\delta_1\delta_2,f_1f_2)$-derivation.
\begin{theorem}\label{teo3.1.1}
    Let $R$ be a ring, $M$ a prime right $R$-module and $2$-torsion free. Let $\delta_i : R \rightarrow R$ be a derivation on $R$, $f_i: M \rightarrow M$ an $R$-module epimorphism and $d_i:M \rightarrow M$ a $(\delta_i,f_i)$-derivation on $M$, $i=1,2$. Then $d_1d_2$ is a $(\delta_1\delta_2,f_1f_2)$-derivation on M  if and only if one of the following conditions holds:
    \begin{enumerate}
        \item $d_1 = 0$,
        \item $d_2 = 0$,
        \item $d_1 \ne 0, d_2 \ne 0$ and $d_1,d_2 \in End_R(M)$.
    \end{enumerate}
\end{theorem}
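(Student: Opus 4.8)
The plan is to collapse the biconditional into a single functional identity on $M$ and then exploit the primeness of $M$. First I would compute $d_1d_2(xa)$: expanding with the $(\delta_2,f_2)$-derivation rule for $d_2$, then with the $(\delta_1,f_1)$-derivation rule for $d_1$ together with the fact that $f_1$ is an $R$-module map, and comparing with the target identity $d_1d_2(xa)=d_1d_2(x)a+f_1f_2(x)\delta_1\delta_2(a)$, the ``diagonal'' terms cancel and the entire content of the hypothesis becomes the single condition
\begin{align*}
f_1(d_2(x))\,\delta_1(a)+d_1(f_2(x))\,\delta_2(a)=0\qquad\text{for all }x\in M,\ a\in R.\tag{$*$}
\end{align*}
Thus $d_1d_2$ is a $(\delta_1\delta_2,f_1f_2)$-derivation if and only if $(*)$ holds. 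For the ``if'' direction I would record the basic fact that both $d_i=0$ and $d_i\in\mathrm{End}_R(M)$ force $f_i(x)\delta_i(a)=0$, whence, $f_i$ being onto, $\delta_i(R)\subseteq Ann_R^r(M)$; in each of the three listed cases every summand of $(*)$ then vanishes, either because the derivation $d_i$ occurring in it is zero or because the factor $\delta_i(a)$ appearing in it already lies in $Ann_R^r(M)$.

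For the converse I assume $(*)$ and manufacture a free ring variable so that primeness becomes usable. Replacing $a$ by $ab$ in $(*)$ and applying the Leibniz rule to $\delta_1,\delta_2$, the original terms cancel and I am left with
\begin{align*}
f_1(d_2(x))\,a\,\delta_1(b)+d_1(f_2(x))\,a\,\delta_2(b)=0\qquad(x\in M,\ a,b\in R).\tag{$**$}
\end{align*}
Then I would substitute $x\mapsto xe$ in $(**)$, using $f_1(d_2(xe))=f_1(d_2(x))e+f_1f_2(x)\delta_2(e)$ and $d_1(f_2(xe))=d_1(f_2(x))e+f_1f_2(x)\delta_1(e)$; the leading terms cancel by $(**)$, and since $f_1f_2$ is onto the surviving bracket must annihilate $M$. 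Reducing modulo $Ann_R^r(M)$ and writing $\phi_i\colon R\to\overline R$ for the induced maps of $\delta_i$ into the prime ring $\overline R$, this yields the pure cross-term relation
\begin{align*}
\phi_2(e)\,r\,\phi_1(b)+\phi_1(e)\,r\,\phi_2(b)=0\qquad(e,b\in R,\ r\in\overline R).\tag{$\sharp$}
\end{align*}

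Relation $(\sharp)$ is exactly the identity at the heart of Creedon's theorem, so I would finish it by his elementary route rather than invoking it as a black box. Setting $e=b=x$ and abbreviating $p=\phi_1(x)$, $q=\phi_2(x)$ gives $qrp+prq=0$ for all $r\in\overline R$; substituting $r\mapsto rpw$ and using this relation to rewrite one summand produces $p\,r\,(pwq-qwp)=0$, so by primeness either $p=0$ or $pwq=qwp$ for all $w$. In the latter case the instance $r=w$ of $(\sharp)$ reads $qwp=-pwq$, so $pwq=qwp=-pwq$ gives $2\,pwq=0$; here the $2$-torsion-freeness of $\overline R$ is essential, for it yields $p\,\overline R\,q=0$ and hence $q=0$ by primeness. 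Thus for every $x$ either $\phi_1(x)=0$ or $\phi_2(x)=0$; since $\ker\phi_1$ and $\ker\phi_2$ are additive subgroups whose union is $R$, and a group is never the union of two proper subgroups, one concludes $\phi_1=0$ or $\phi_2=0$, that is, $d_1\in\mathrm{End}_R(M)$ or $d_2\in\mathrm{End}_R(M)$.

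It remains to upgrade this dichotomy to case $3$ under the standing assumptions $d_1\neq0$ and $d_2\neq0$. If $\phi_1=0$, then $a\,\delta_1(b)\in Ann_R^r(M)$, the first summand of $(**)$ drops out, and $(**)$ becomes $d_1(f_2(x))\,a\,\delta_2(b)=0$; since $f_2$ is onto this reads $d_1(m)\,R\,\delta_2(b)=0$ for all $m\in M$, and choosing $m$ with $d_1(m)\neq0$ (possible as $d_1\neq0$) primeness of $M$ forces $\delta_2(b)\in Ann_R^r(M)$, i.e.\ $\phi_2=0$ and $d_2\in\mathrm{End}_R(M)$, so case $3$ holds. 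I expect the genuine obstacle to be the mirror branch $\phi_2=0$: there $(**)$ collapses to $f_1(d_2(x))\,R\,\delta_1(b)=0$, and to rerun the primeness argument I must guarantee that $f_1(d_2(x))\neq0$ for some $x$, i.e.\ that $d_2(M)\not\subseteq\ker f_1$. Because $f_1$ is only assumed to be an epimorphism, this requires $f_1$ to be injective, or at least that surjective endomorphisms of $M$ be injective --- which holds, for instance, when $M$ is Noetherian, as for $\mathbb{Z}[x]\times\mathbb{Z}[x]$ in Example~\ref{example2.1}. The conceptual weight of the theorem therefore lies not in the algebraic identities or the prime-ring computation, but precisely in controlling that the outer epimorphism $f_1$ does not swallow the image of $d_2$, so that primeness of the module can be brought to bear on this asymmetric second branch.
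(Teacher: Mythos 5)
Your reduction of the defining identity to $(*)$, the two substitutions producing $(**)$ and $(\sharp)$, and your elementary resolution of $(\sharp)$ inside the prime $2$-torsion-free ring $\overline{R}$ (including the two-proper-subgroups trick to globalize the pointwise dichotomy) are all correct, and they follow a genuinely different route from the paper. The paper instead verifies $\delta_i(Ann_R^r(M))\subseteq Ann_R^r(M)$, passes to induced derivations $\delta_i^*$ on $\overline{R}=R/Ann_R^r(M)$, checks that $(\delta_1\delta_2)^*=\delta_1^*\delta_2^*$ is again a derivation there, and invokes Posner's first theorem to obtain $\delta_1^*=0$ or $\delta_2^*=0$ --- precisely the dichotomy $\phi_1=0$ or $\phi_2=0$ that you prove by hand; your version is self-contained where the paper's is shorter by citation. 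In the branch $\phi_1=0$ the paper then argues exactly as you do: the surviving cross term $d_1(f_2(m))\delta_2(x)$ has the epimorphism \emph{inside} $d_1$, so surjectivity of $f_2$ converts it to $d_1(m)R\delta_2(x)=0$ for all $m\in M$, and primeness of $M$ closes the case.

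The obstacle you isolate in the mirror branch is genuine, and it is a gap in the paper as well, concealed in the sentence ``By similar argument as above, if $\delta_2^*=0$\dots''. There the cross term is $f_1(d_2(m))\delta_1(x)$ with the epimorphism \emph{outside}, and primeness only yields $f_1(d_2(m))=0$ for all $m$, i.e.\ $d_2(M)\subseteq\ker f_1$; the paper's claimed conclusion $d_2=0$ requires $f_1$ to be injective, which an epimorphism need not be. In fact the ``only if'' direction fails as stated: take $R=\mathbb{Z}[t]$, $M=\bigoplus_{i\geq 1}\mathbb{Z}[t]$ (prime and $2$-torsion free), $\delta_1=d/dt$, $\delta_2=0$, $f_2=id_M$, $f_1$ the left shift $(a_1,a_2,\dots)\mapsto(a_2,a_3,\dots)$, $d_2(a_1,a_2,\dots)=(a_1,0,0,\dots)$, and $d_1=\partial\circ f_1$ with $\partial$ coordinatewise differentiation. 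Then $d_1(xa)=d_1(x)a+f_1(x)\delta_1(a)$, so $d_1$ is a nonzero $(\delta_1,f_1)$-derivation not lying in $End_R(M)$, $d_2$ is a nonzero endomorphism (hence a $(\delta_2,f_2)$-derivation), and $d_2(M)\subseteq\ker f_1$ forces $d_1d_2=0$, which is trivially a $(\delta_1\delta_2,f_1f_2)$-derivation since $\delta_1\delta_2=0$; none of conditions (1)--(3) holds. Your proposed repair is the right one: if $M$ is Hopfian (e.g.\ Noetherian, as in Example \ref{example2.1}), the epimorphism $f_1$ is injective, $d_2(M)\subseteq\ker f_1$ then contradicts $d_2\neq 0$, and both your argument and the paper's go through.
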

\begin{proof}
    Based on \cite[Proposition 2.2 (i)]{indah}, we have $\overline{R}$ is a prime ring. Moreover, $M$ is a right module over $\overline{R}$.

    Let $x \in Ann_R^r(M)$ and $m \in M$. We obtain
    \begin{align*}
        d_i (mx)=d_i(m)x+f_i(m)\delta_i(x), ~~ i=1,2.
    \end{align*}
    So $f_i(m)\delta_i(x)=0$. Because $f_2$ is an $R$-module epimorphism, we have $\delta_i(Ann_R^r(M)) \subseteq Ann_R^r(M)$. Thus, we can define a map $\delta_i^* : \overline{R} \rightarrow \overline{R}$ for $i=1,2$ by $\delta_i^* (x+Ann_R^r(M))=\delta_i(x)+Ann_R^r(M)$ for all $x \in R$. Because $\delta_i(Ann_R^r(M)) \subseteq Ann_R^r(M)$, this definition is well-defined. Furthermore, each $\delta_i^*$ is an additive mapping. Now, let $m \in M, x \in R$ and for $i=1,2$, we have
    \begin{align*}
        d_i(m(x+Ann_R^r(M))) &= d_i(mx)\\
        &= d_i(m)x+f_i(m)\delta_i (x) \\
        &= d_i(m)(x+Ann_R^r(M)) + f_i(m)(\delta_i(x)+Ann_R^r(M))\\
        &= d_i(m)(x+Ann_R^r(M)) + f_i(m)\delta_i^*(x+Ann_R^r(M)).
    \end{align*}
    So each $d_i$ is $(\delta_i^*,f_i)$-derivation and since $Ann_{\overline{R}}^r (M)=(0)$, it follows that each $\delta_i^*$ is derivation on $\overline{R}$. Additionaly, it is straighforward to verify that $(\delta_1\delta_2)^* = \delta_1^* \delta_2^*$. So,  $\delta_1^* \delta_2^*$ is a derivation on $\overline{R}$. Given that $d_1d_2$ is $(\delta_1\delta_2,f_1f_2)$-derivation and using a similar argument as before, we conclude that $d_1d_2$ is also $( \delta_1^* \delta_2^*,f_1f_2)$-derivation. Moreover, by Posner's first theorem, either $\delta_1^*=0$ or $\delta_2^*=0$.

    Assumed $\delta_1^*=0$. By the definition of $\delta_1^*$, we have $\delta_1(R)\subseteq Ann_R^r(M)$. Consequently $d_1(mx)=d_1(m)x$ for all $m \in M$ and $x \in R$, which shows that $d_1 \in End_R M$. Therefore, 
    \begin{align*}
        d_1d_2(mx)&= d_1(d_2(m)x+f_2(m)\delta_2(x))\\
        &= d_1d_2(m)x+d_1(f_2(m))\delta_2(x).
    \end{align*}
    Meanwhile, using the assumption and the fact that $\delta_1(R)\subseteq Ann_R^r(M)$, we also have
    \begin{align*}
        d_1d_2(mx)&=d_1d_2(m)x+f_1f_2(m)\delta_1\delta_2(x)\\
        &= d_1d_2(m)x.
    \end{align*}
    Comparing the two expressions for $d_1d_2(mx)$, we conclude
    \begin{align*}
        d_1(f_2(m))\delta_2(x) = 0.
    \end{align*}
    Thus, for all $m \in M$ and $x,y\in R$, we get
    \begin{align*}
        d_1(f_2(m))x\delta_2(x)=d_1(f_2(m)x)\delta_2(x)=0,
    \end{align*}
    which implies that $d_1(f_2(m))R\delta_2(x)=0$ for all $m \in M$ and $x \in R$. Since $M$ is prime module, this yields either $d_1(f_2(m))=0$ or $M \delta_2(x)=0$ for all $m \in M$ and $x \in R$. Therefore, we conclude that either $d_1=0$, or $d_1 \ne 0$ and $M\delta_2(x) =0$ for all $x \in R$, which again means $d_1=0$ or $d_1 \ne 0$ and $d_2 \in End_R(M)$.

    By similar argument as above, if $\delta_2^*=0$, we obtain that either $d_2=0$ or $d_2 \ne 0$ and $d_1 \in End_R(M)$.

    As a result, one of ther alternatives $(1),(2),(3)$ must be true.
    \\
    Conversely, suppose that case $(2)$ holds. So $f_2(m)\delta_2(x)=0$ and we have $\delta_2(R)\subseteq Ann_R^r(M)$. Using a similiar argument as before, we can also establish that $\delta_1(Ann_R^r(M))\subseteq Ann_R^r(M)$. Therefore $d_1d_2=0$ and we obtain
    \begin{align*}
        0 = d_1d_2(mx)=d_1d_2(m)x+f_1f_2(m)\delta_1\delta_2(x)
    \end{align*}
    for all $m \in M$ and $x \in R$. This means that $d_1d_2$ is a $(\delta_1\delta_2,f_1f_2)$-derivation.

    If either case $(1)$ or $(3)$ occur instead, the same line of argument proves that $d_1d_2$ is also a  $(\delta_1\delta_2,f_1f_2)$-derivation.
\end{proof}

The following example shows us the fact that the composition of a non-zero derivation and a non-zero  endomorphism is not necessary an endomorphism. 
\begin{example}\label{example2.2}
We recall the notions in Example  \ref{example2.1} and 
    consider  $(\delta_1,f_1)$-derivation $d:\mathbb{Z}[x] \times \mathbb{Z}[x] \rightarrow \mathbb{Z}[x] \times \mathbb{Z}[x]$ as in the Example \ref{example2.1}. We define an endomorphism $\gamma: \mathbb{Z}[x] \times \mathbb{Z}[x] \rightarrow \mathbb{Z}[x] \times \mathbb{Z}[x]$ over $\mathbb{Z}[x]$ as  $\gamma \left( \begin{bmatrix}
        a(x)\\
        b(x)
    \end{bmatrix} \right) = \begin{bmatrix}
        2a(x)+3b(x)\\
        a(x)
    \end{bmatrix}$.

    Let $\begin{bmatrix}
        a(x)\\
        b(x)
    \end{bmatrix} \in \mathbb{Z}[x] \times \mathbb{Z}[x]$ and $c(x) \in \mathbb{Z}[x]$. We obtain
    \begin{align*}
        d_1\gamma \left( \begin{bmatrix}
            a(x)\\
            b(x)
        \end{bmatrix} c(x) \right) &= d_1 \gamma \left( \begin{bmatrix}
            a(x)c(x)\\
            b(x)c(x)
        \end{bmatrix} \right) \\
        &= d_1 \left( \begin{bmatrix}
        2a(x)c(x)+3b(x)c(x)\\
        a(x)c(x)
        \end{bmatrix} \right) \\
        &= \begin{bmatrix}
            2(a'(x)c(x)+a(x)c'(x))+3(b'(x)c(x)+b(x)c'(x))\\
            a'(x)c(x)+a(x)c'(x)
        \end{bmatrix}
    \end{align*}
    and
    \begin{align*}
        d_1 \gamma \left( \begin{bmatrix}
            a(x)\\
            b(x)
        \end{bmatrix} \right) c(x) &= d_1 \left( \begin{bmatrix}
            2a(x)+3b(x)\\
            a(x)
        \end{bmatrix} \right)c(x)\\
        &= \begin{bmatrix}
            2a'(x)+3b'(x)\\
            a'(x)
        \end{bmatrix} c(x) = \begin{bmatrix}
            2a'(x)c(x)+3b'(x)c(x)\\
            a'(x)c(x)
        \end{bmatrix}.
    \end{align*}
    Since $ d_1\gamma \left( \begin{bmatrix}
            a(x)\\
            b(x)
        \end{bmatrix} c(x) \right) \ne  d_1 \gamma \left( \begin{bmatrix}
            a(x)\\
            b(x)
        \end{bmatrix} \right) c(x) $, we conclude that $d_1 \gamma \notin End_R(M)$.
\end{example}

As corollary of Theorem \ref{teo3.1.1} we obtain the following necessary and sufficient condition.

\begin{corollary}\label{remark3.1.1}
    Let $R$ be a ring, $M$ a right prime module and $2$-torsion free. Let $\delta$ be a derivation on $R$,  $f: M \rightarrow M$ an $R$-module epimorphism, $\gamma$ an endomorphism on $M$ and a $(\delta,f)$-derivation $d:M \rightarrow M$ on $M$. Then $d\gamma \in End_R (M)$ if and only if one of the following conditions hold:
    \begin{enumerate}
        \item $d=0$,
        \item $\gamma = 0$,
        \item $d\ne 0, \gamma \ne 0$ and $d \in End_R(M)$.
    \end{enumerate}
\end{corollary}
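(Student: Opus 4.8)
The plan is to deduce this corollary from Theorem \ref{teo3.1.1} by viewing the endomorphism $\gamma$ as a degenerate $(\delta,f)$-derivation. Specifically, I would set $d_1 = d$ with $(\delta_1,f_1) = (\delta,f)$, and realize $\gamma$ as $d_2$ by taking $\delta_2 = 0$ (the zero map, which is trivially a derivation on $R$) and $f_2 = id_M$ (which is an $R$-module epimorphism). The key observation is that an $R$-module endomorphism satisfies $\gamma(xa) = \gamma(x)a$; since $\delta_2 = 0$ kills the term $f_2(x)\delta_2(a)$, this is exactly the identity $\gamma(xa) = \gamma(x)a + f_2(x)\delta_2(a)$, so $\gamma$ is a $(\delta_2,f_2)$-derivation. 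Thus both $d_1$ and $d_2$ satisfy the hypotheses of the theorem.

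Next I would compute the composite data $\delta_1\delta_2 = \delta \circ 0 = 0$ and $f_1 f_2 = f \circ id_M = f$, and then make the central observation: $d_1 d_2 = d\gamma$ is a $(\delta_1\delta_2, f_1f_2) = (0,f)$-derivation if and only if $d\gamma(xa) = d\gamma(x)a$ for all $x \in M$ and $a \in R$ (the term $f(x)\delta_1\delta_2(a)$ vanishes because $\delta_1\delta_2 = 0$), which is precisely the statement $d\gamma \in End_R(M)$. Hence ``$d\gamma \in End_R(M)$'' and ``$d_1 d_2$ is a $(\delta_1\delta_2, f_1f_2)$-derivation'' are equivalent, and Theorem \ref{teo3.1.1} applies directly to $d_1, d_2$.

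Applying the theorem yields the three alternatives $d_1 = 0$, $d_2 = 0$, or $d_1 \ne 0, d_2 \ne 0$ with $d_1, d_2 \in End_R(M)$, which translate back to $d = 0$, $\gamma = 0$, or $d \ne 0, \gamma \ne 0$ with $d, \gamma \in End_R(M)$. The only subtlety is the apparent asymmetry between the theorem's third alternative and the corollary's condition (3): since $\gamma$ is assumed to be an endomorphism at the outset, the requirement $\gamma \in End_R(M)$ is automatic, so the third alternative collapses to $d \ne 0, \gamma \ne 0$ and $d \in End_R(M)$, matching the statement.

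I do not anticipate a serious obstacle, as all the substantive work already resides in Theorem \ref{teo3.1.1}. The points to handle carefully are confirming that $f_2 = id_M$ is a legitimate epimorphism and that $\gamma$ genuinely fits the $(\delta_2,f_2)$-derivation framework, together with recording (for the converse under alternative (3)) that the composite of two $R$-module endomorphisms is again an endomorphism.
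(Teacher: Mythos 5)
Your proposal is correct and follows essentially the same route the paper intends: the corollary is exactly the specialization of Theorem \ref{teo3.1.1} obtained by taking $d_1=d$, $(\delta_1,f_1)=(\delta,f)$, and realizing $\gamma$ as $d_2$ with $\delta_2=0$ and $f_2=id_M$, so that ``$d\gamma$ is a $(\delta_1\delta_2,f_1f_2)=(0,f)$-derivation'' is literally the condition $d\gamma \in End_R(M)$. You also correctly handle the two points the paper leaves implicit, namely that $\gamma$ satisfies the $(0,id_M)$-derivation identity because it is an endomorphism, and that the theorem's third alternative collapses to condition (3) since $\gamma \in End_R(M)$ holds by hypothesis.
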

Another corollary  of Theorem \ref{teo3.1.1} is the special case in $R$ as a module over itself. 
\begin{theorem}\label{thPos1}
    Let $R$ be a prime ring $2$-torsion free,  $\delta_i : R \rightarrow R$ a derivation on $R$, $f_i: R \rightarrow R$ an $R$-module epimorphism and  $d_i:R \rightarrow R$ a  $(\delta_i,f_i)$-derivation
    on $R$ for $i=1,2$. If $d_1d_2$ is $(\delta_1\delta_2,f_1f_2)$-derivation then $d_1 = 0$ or $d_2=0$.
\end{theorem}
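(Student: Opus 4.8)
The plan is to derive Theorem \ref{thPos1} as the specialization of Theorem \ref{teo3.1.1} to the case $M=R$, viewing $R$ as a right module over itself, and then to collapse alternative (3) into alternatives (1) and (2). First I would record that $R$ is faithful over itself: since $R$ has an identity, taking $m=1$ in the definition of $Ann_R^r(R)$ forces any annihilating element to be $0$, so $Ann_R^r(R)=0$, hence $\overline{R}=R$ and the induced derivations $\delta_i^*$ appearing in the proof of Theorem \ref{teo3.1.1} coincide with the $\delta_i$ themselves. Thus all hypotheses of Theorem \ref{teo3.1.1} are in force, and $d_1d_2$ being a $(\delta_1\delta_2,f_1f_2)$-derivation places us in one of the three alternatives (1), (2), (3).

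Alternatives (1) and (2) already yield $d_1=0$ or $d_2=0$, so the entire content of the theorem is the exclusion of alternative (3). Here I would exploit the self-module structure. Because $R$ has an identity, every $d\in End_R(R)$ is left multiplication by $d(1)$; in particular, if $d$ is simultaneously a $(\delta,f)$-derivation and an $R$-endomorphism, then comparing $d(xa)=d(x)a$ with $d(xa)=d(x)a+f(x)\delta(a)$ gives $f(x)\delta(a)=0$ for all $x,a\in R$, and surjectivity of $f$ forces $R\,\delta(a)=0$, whence $\delta(a)=0$ by faithfulness. Consequently, in alternative (3) one necessarily has $\delta_1=\delta_2=0$, so $d_1$ and $d_2$ are the left multiplications $L_{c_1},L_{c_2}$ with $c_i=d_i(1)$.

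The main obstacle is precisely the elimination of this degenerate branch. The natural tool is primeness applied to the identity $d_1(f_2(m))\,\delta_2(x)=0$ obtained in the proof of Theorem \ref{teo3.1.1}: unwinding it through the module action gives $d_1(f_2(m))\,R\,\delta_2(x)=0$, and hence $d_1=0$ or $\delta_2=0$. I expect this identity to lose all force once $\delta_2=0$, so the decisive step is to rule out the possibility that a nonzero $(\delta,f)$-derivation on $R$ with $f$ an epimorphism is also an $R$-endomorphism. This is the delicate point on which the theorem turns, and where I would concentrate the argument: one must verify that the standing hypotheses preclude the pure-endomorphism branch, since otherwise the configuration $d_1=d_2=\mathrm{id}_R$, $\delta_1=\delta_2=0$, $f_1=f_2=\mathrm{id}_R$ (for which $d_1d_2=\mathrm{id}_R$ is trivially a $(0,\mathrm{id}_R)$-derivation) would slip through with both $d_i\neq 0$. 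Clarifying exactly which additional assumption (for instance, the nontriviality of the $\delta_i$) is needed to close this gap is the crux of the proof.
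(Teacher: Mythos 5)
Your route is exactly the paper's: the paper in fact offers no proof of Theorem \ref{thPos1} at all, presenting it as an immediate consequence of Theorem \ref{teo3.1.1} applied to $M=R$ as a right module over itself. Your analysis uncovers the real issue with that deduction: the specialization only yields the trichotomy of Theorem \ref{teo3.1.1}, and nothing in the hypotheses of Theorem \ref{thPos1} excludes alternative (3). Your degenerate configuration makes this concrete: take $R=\mathbb{Z}$ (prime, $2$-torsion free), $\delta_1=\delta_2=0$, $f_1=f_2=d_1=d_2=\mathrm{id}_R$. Each $d_i$ is then a $(\delta_i,f_i)$-derivation, and $d_1d_2=\mathrm{id}_R$ is a $(\delta_1\delta_2,f_1f_2)=(0,\mathrm{id}_R)$-derivation, yet $d_1\neq 0\neq d_2$. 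So the statement as printed is false, and your refusal to manufacture an argument eliminating branch (3) is the correct mathematical judgment: that elimination is not a missing lemma but an overclaim in the paper, which silently drops alternative (3) when passing from Theorem \ref{teo3.1.1} to Theorem \ref{thPos1}.

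Your intermediate steps also identify exactly what \emph{can} be salvaged. Since $R_R$ is faithful, $\overline{R}=R$ and $\delta_i^*=\delta_i$, so Posner's first theorem (as invoked inside the proof of Theorem \ref{teo3.1.1}) gives the genuine corollary $\delta_1=0$ or $\delta_2=0$; and your observation that a $(\delta,f)$-derivation with $f$ epic which is simultaneously an $R$-endomorphism forces $\delta=0$ (via $f(x)\delta(a)=0$, surjectivity, and faithfulness) shows that the correct conclusion for $M=R$ is: $d_1=0$, or $d_2=0$, or both $d_i$ are the left multiplications $L_{d_i(1)}$ with $\delta_1=\delta_2=0$. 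One caution about your suggested repair: adding the hypothesis $\delta_1,\delta_2\neq 0$ rescues the statement only vacuously, because by the same surjectivity-plus-faithfulness argument $d_i=0$ already forces $\delta_i=0$, so under $\delta_1,\delta_2\neq 0$ none of the three alternatives can occur and the premise that $d_1d_2$ is a $(\delta_1\delta_2,f_1f_2)$-derivation is unsatisfiable. The meaningful ring-level generalization of Posner's first theorem here is therefore the conclusion about the $\delta_i$, not the $d_i$.
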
 
    Theorem \ref{thPos1} gives the generalization of the first Posner's Theorem in case of $(\delta,f)$-derivations on rings.

In the following lemma, we show some further properties of prime submodules. 
\begin{lemma}\label{idealprima}
    Let $R$ be a ring and $M$ a right module over $R$. If $L$ is a prime submodule of $M$, then $(L:M)$ is a prime ideal of  $R$.
\end{lemma}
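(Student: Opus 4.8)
The plan is to verify the two defining properties of a prime ideal of $R$: that $(L:M)$ is a \emph{proper} ideal, and that it satisfies the multiplicative primeness condition, namely $aRb \subseteq (L:M)$ implies $a \in (L:M)$ or $b \in (L:M)$ for all $a,b \in R$. That $(L:M)$ is an ideal of $R$ is already recorded in the excerpt, so only these two points remain to be established.

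First I would check properness. Since $L$ is by definition a proper submodule of $M$, there exists $m_0 \in M \setminus L$. If $1 \in (L:M)$, then $m = m\cdot 1 \in L$ for every $m \in M$, forcing $M \subseteq L$ and contradicting $m_0 \notin L$; here I use that $M$ is unital. Hence $1 \notin (L:M)$, so $(L:M) \ne R$.

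For the primeness condition I would take $a,b \in R$ with $aRb \subseteq (L:M)$ and aim to show $a \in (L:M)$ or $b \in (L:M)$. Unwinding the definition, $aRb \subseteq (L:M)$ means that $m(arb) \in L$ for all $m \in M$ and all $r \in R$. The key move is to rewrite $m(arb) = (ma)rb$ and read this as: for every fixed $m \in M$ one has $(ma)Rb \subseteq L$. Now the prime submodule hypothesis applies to the element $ma \in M$ together with the ring element $b$, yielding $ma \in L$ or $b \in (L:M)$. I would then resolve the case split: if $b \in (L:M)$ we are done, and otherwise $b \notin (L:M)$ forces $ma \in L$; since this holds for every $m \in M$, we obtain $Ma \subseteq L$, that is $a \in (L:M)$. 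Either way the primeness condition holds.

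The main obstacle — really the only delicate point — is the quantifier bookkeeping. The prime submodule property is applied separately for each $m$, and one must observe that the alternative $b \in (L:M)$ is independent of $m$; this is what allows the conclusion that, when $b \notin (L:M)$, the first alternative $ma \in L$ must hold uniformly in $m$, giving $a \in (L:M)$. The accompanying step that requires attention is recognizing that the prime submodule definition should be applied to $ma$ rather than to $m$ directly, since it is precisely this substitution that converts the hypothesis $aRb \subseteq (L:M)$ into the usable module-theoretic statement $(ma)Rb \subseteq L$.
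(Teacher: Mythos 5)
Your proof is correct and takes essentially the same approach as the paper's: both rewrite $m(arb)=(ma)rb$, apply the prime-submodule definition to the element $ma$ together with $b$, and resolve the case split by noting that if $b\notin(L:M)$ then $ma\in L$ for every $m\in M$, giving $a\in(L:M)$. Your explicit properness check ($1\notin(L:M)$ since $M$ is unital and $L$ is proper) is a small point the paper leaves implicit, but it does not change the substance of the argument.
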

\begin{proof}
    Suppose $L$ is a prime submodule of $M$. First, we show that $(L:M)$ is an ideal. By definition, for any $r,s \in (L:M)$ and $m \in M$. Since $mr, ms \in L$,  we have $m(r-s)=mr-ms \in L$. For any $r \in (L:M), a\in R$ and $m \in M$ then $m(ar)=(ma)r \in L$ and $m(ra)=(mr)a \in L$. Therefore $ar,ra \in (L:M)$. So, $(L:M)$ is an ideal of $R$.\\
    Next, show that $(L:M)$ is prime ideal of $R$. Let $a,b \in R$ satisfy $aRb \subseteq (L:M)$. This means that for all $m \in M$ and $r \in R, m(arb)=(ma)rb \in L$. Since $L$ is prime submodule, this implies that $ma \in L$ or $b \in Ann_R^r(M/L)$. If $ma \in L$ for every $m \in M$, then by the definition of $(L:M)$, it follows that $a \in (L:M)$. Otherwise, if $b \in Ann_R^r(M/L)$ then $b \in (L:M)$. Therefore, $(L:M)$ is a prime ideal of $R$.
\end{proof}

Creedon \cite[Theorem 2]{creedon} showed that if $\delta_1$ and $\delta_2$ are derivations on a ring $R$ such that their composition $\delta_1 \delta_2 (R)$ is contained within a prime ideal $P$, and if the ring $R/P$ is $2$-torsion free, then at least one of the derivations satisfies $\delta_1(R) \subseteq P$ or $\delta_2 (R) \subseteq P$. 

We extend Creedon's result to right modules as we can see in the following theorem.
\begin{theorem}\label{teo3.1.2}
    Let $R$ be a ring, $M$ a right module over $R$ and $L$ a prime submodule of $M$ such that $M/L$ is a $2$-torsion free module over $\tilde{R}=R/(L:M)$. Let $\delta_i : R \rightarrow R$ be a derivation on $R$, $f_i: M \rightarrow M$ an $R$-module epimorphism and $d_i:M \rightarrow M$ a $(\delta_i,f_i)$-derivation on $M$, for $i=1,2$. If $d_1d_2(M) \subseteq L$ and $\delta_1\delta_2(R)\subseteq (L:M)$, then one of the following conditions holds:
    \begin{enumerate}
        \item $d_1(M)\subseteq L$;
        \item $d_2(M) \subseteq L$;
        \item $d_1(M), d_2(M) \nsubseteq L$ and $\delta_1(R), \delta_2(R)\subseteq (L:M)$.
    \end{enumerate}
\end{theorem}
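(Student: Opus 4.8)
The plan is to transport the problem to the prime quotient ring $\tilde R = R/(L:M)$ and to exploit the primeness of the submodule $L$ directly, rather than re-running the full Posner machinery inside $M$. By Lemma \ref{idealprima}, $(L:M)$ is a prime ideal, so $\tilde R$ is a prime ring; moreover, since $M/L$ is $2$-torsion free over $\tilde R$, the same remark used for $\overline{R}$ in Section 2 shows that $\tilde R$ is $2$-torsion free. The two derivations $\delta_1,\delta_2$ on $R$ satisfy $\delta_1\delta_2(R)\subseteq (L:M)$, so Creedon's Theorem (\cite[Theorem 2]{creedon}), applied with the prime ideal $P=(L:M)$, yields at once that $\delta_1(R)\subseteq (L:M)$ or $\delta_2(R)\subseteq (L:M)$. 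This is what replaces the appeal to Posner's first theorem made in the proof of Theorem \ref{teo3.1.1}.

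Next I would expand $d_1d_2(mx)$ for arbitrary $m\in M$, $x\in R$, substituting the defining identities of the two $(\delta_i,f_i)$-derivations. Using $\delta_1\delta_2(R)\subseteq (L:M)$ to place the term $f_1f_2(m)\,\delta_1\delta_2(x)$ inside $L$, and using $d_1d_2(M)\subseteq L$ to discard $d_1d_2(mx)$ and $d_1d_2(m)x$, I expect to arrive at the congruence
\[
f_1(d_2(m))\,\delta_1(x) + d_1(f_2(m))\,\delta_2(x) \in L \qquad (m\in M,\ x\in R).
\]
This single relation carries all the information and is the module analogue of the central computation in Theorem \ref{teo3.1.1}.

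In the branch $\delta_1(R)\subseteq (L:M)$ the first summand already lies in $L$, leaving $d_1(f_2(m))\delta_2(x)\in L$; since $f_2$ is an epimorphism, $d_1(n)\delta_2(x)\in L$ for every $n\in M$, and replacing $n$ by $nr$ (the correction term being absorbed because $\delta_1(R)\subseteq (L:M)$) upgrades this to $d_1(n)\,R\,\delta_2(x)\subseteq L$. Primeness of $L$ then forces, for each pair, $d_1(n)\in L$ or $\delta_2(x)\in (L:M)$, and the standard all-or-nothing dichotomy gives either $d_1(M)\subseteq L$, which is alternative $(1)$, or $\delta_2(R)\subseteq (L:M)$. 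In the latter case both derivations map $R$ into $(L:M)$, and a trivial split according to whether $d_1(M)$ or $d_2(M)$ lies in $L$ lands in $(1)$, $(2)$, or $(3)$.

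The symmetric branch $\delta_2(R)\subseteq (L:M)$ is where I expect the real difficulty. There the identity collapses to $f_1(d_2(m))\delta_1(x)\in L$, and the same sandwiching yields $f_1(d_2(m))\,R\,\delta_1(x)\subseteq L$; primeness gives either $\delta_1(R)\subseteq (L:M)$ (whence both derivations vanish modulo $(L:M)$ and we split as before) or $f_1(d_2(M))\subseteq L$. The hard part is precisely the passage from $f_1(d_2(M))\subseteq L$ to the desired $d_2(M)\subseteq L$ of alternative $(2)$: because $f_1$ sits on the \emph{outside} of $d_2$ and is only assumed to be an epimorphism, I cannot cancel it the way the inner $f_2$ was cancelled in the favourable branch. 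Resolving this needs $f_1$ to interact well with $L$, equivalently $f_1^{-1}(L)=L$ so that $d_2(M)\subseteq f_1^{-1}(L)$ collapses to $d_2(M)\subseteq L$; this is most cleanly obtained by first verifying that each $f_i$ and $d_i$ carries $L$ into $L$ and then reducing the whole statement to Theorem \ref{teo3.1.1} applied to the induced maps $\delta_i^{*},\ \overline{f}_i,\ \overline{d}_i$ on the prime $2$-torsion free $\tilde R$-module $M/L$. Under that reduction the trichotomy of Theorem \ref{teo3.1.1} translates verbatim into $(1)$--$(3)$, with condition $(3)$ corresponding to $\overline{d}_i\in End_{\tilde R}(M/L)$, i.e.\ $\delta_i(R)\subseteq (L:M)$.
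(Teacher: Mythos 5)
Your first branch reproduces the paper's argument faithfully: Lemma \ref{idealprima} makes $(L:M)$ a prime ideal with $\tilde R$ $2$-torsion free, Creedon's theorem gives $\delta_1(R)\subseteq (L:M)$ or $\delta_2(R)\subseteq (L:M)$, the expansion of $d_1d_2(mx)$ yields exactly the congruence $f_1(d_2(m))\delta_1(x)+d_1(f_2(m))\delta_2(x)\in L$, and in the case $\delta_1(R)\subseteq (L:M)$ the sandwich $d_1(n)R\,\delta_2(x)\subseteq L$ (with surjectivity of $f_2$ used on the \emph{inside} of $d_1$) together with primeness of $L$ gives the dichotomy, just as in the paper. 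You are also right that the second branch is not symmetric: there the mirrored computation only delivers $f_1(d_2(m))R\,\delta_1(x)\subseteq L$, hence $f_1(d_2(M))\subseteq L$ or $\delta_1(R)\subseteq (L:M)$, and the paper itself passes over this point with the word ``Similarly,'' asserting $d_2(M)\subseteq L$ where the honest symmetric argument produces only $f_1(d_2(M))\subseteq L$. So your diagnosis of where the difficulty sits is correct and sharper than the published text.

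The problem is that your proposed repair is itself an unproved claim, so the gap is flagged but not closed. To pass from $f_1(d_2(M))\subseteq L$ to $d_2(M)\subseteq L$ you need $f_1^{-1}(L)\subseteq L$, and to run the reduction to Theorem \ref{teo3.1.1} on $M/L$ you need $f_i(L)\subseteq L$ and $d_i(L)\subseteq L$ so that the induced maps exist; none of these invariance properties follow from the hypotheses, and you offer no argument for them. A prime submodule need not be invariant under a module epimorphism: in $M=\mathbb{Z}\times\mathbb{Z}$ over $\mathbb{Z}$, the submodule $L=0\times\mathbb{Z}$ is prime with $(L:M)=0$ and $M/L\cong\mathbb{Z}$ $2$-torsion free, yet the swap automorphism $(a,b)\mapsto (b,a)$ does not carry $L$ into $L$; a fortiori there is no reason the merely additive maps $d_i$ should preserve $L$. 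Moreover, even if you had $f_1(L)\subseteq L$, that only gives $L\subseteq f_1^{-1}(L)$; what you need is the reverse inclusion, i.e.\ injectivity of the induced map on $M/L$, which is strictly stronger and is the opposite of the control that surjectivity of $f_1$ provides. As written, your argument (like the paper's) establishes the theorem only with alternative (2) weakened to $f_1(d_2(M))\subseteq L$ in the branch $\delta_2(R)\subseteq (L:M)$; the final step to $d_2(M)\subseteq L$ remains open unless an additional hypothesis such as $f_1^{-1}(L)=L$ is imposed.
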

\begin{proof}
    Assume that $L$ is a prime submodule of $M$ such that $M/L$ is $2$-torsion free over $\tilde{R}$. Then $\tilde{R}$ is a $2$-torsion free ring. Furthermore, by Lemma \ref{idealprima} we obtain $(L:M)$ is a prime ideal. Moreover, we assume that $\delta_1\delta_2(R)\subseteq (L:M)$. Based on Creedon's Theorem, it follows that $\delta_1(R) \subseteq (L:M)$ or $\delta_2(R) \subseteq (L:M)$.

    Suppose $\delta_1(R)\subseteq (L:M)$. For any $m \in M$ and $x \in R$ we obtain
    \begin{align*}
        d_1d_2(mx)&= d_1(d_2(m)x+f_2(m)\delta_2(x))\\
        &= d_1d_2(m)x+f_1(d_2(m))\delta_1(x)+d_1(f_2(m))\delta_2(x)+f_1f_2(m)\delta_1\delta_2(x).
    \end{align*}
    Based on the hypothesis, we conclude that $d_1(f_2(m))\delta_2(x) \in L$. Now we consider 
    \begin{align*}
        d_1(f_2(m)x)\delta_2(x) = d_1(f_2(m))x\delta_2(x) + f_1f_2(m)\delta_1(x)\delta_2(x).
    \end{align*}
    Since $f_1f_2(m)\delta_1(x) \in L$ and $d_1(f_2(m)x)\delta_2(x) \in L$, we have $d_1(f_2(m))x\delta_2(x) \in L$ for all $f_2(m)\in M$ and $x, \delta_2(x) \in R$. Thus, 
    \begin{align*}
        d_1(f_2(m))R\delta_2(x) \subseteq L
    \end{align*}
    for all $f_2(m)\in M$ and $\delta_2(x) \in R$. Since $L$ is a prime submodule, there are two possible cases: either $d_1(f_2(m)) \in L$ or  $M\delta_2(x) \subseteq L$ for all $x \in R$ and $m \in M$. Since $f_2$ is an $R$-module epimorphism, we have $d_1(M)\subseteq L$ or  $M\delta_2(x) \subseteq L$. In other words, we must have either $d_1(M)\subseteq L$ or  $d_1(M)\nsubseteq L$ and $\delta_2(R) \subseteq (L:M)$.

    Similarly, if $\delta_2(R)\subseteq (L:M)$, then either $d_2(M)\subseteq L$ or $d_2(M)\subseteq L$ and $\delta_1(R) \subseteq (L:M)$.
\end{proof}

The conditions of $\delta_1(R) \nsubseteq (L:M), \delta_2(R) \nsubseteq (L:M)$ 
in Theorem \ref{teo3.1.2} are necessary. We give an example, in which $d_1(M) , d_2 (M) \subseteq L$, 
$\delta_1(R) \nsubseteq (L:M)$, 
$ \delta_2(R) \nsubseteq (L:M)$, but   $d_1d_2(M) \subseteq L$ and $\delta_1 \delta_2 (R) \nsubseteq (L:M)$.
\begin{example}\label{example2.3}
    We consider the polynomial ring $\mathbb{Z}[x]$, module  $\mathbb{Z}[x] \times \mathbb{Z}[x]$ over $\mathbb{Z}[x]$ and $L = \left\{ \begin{bmatrix}
        a(x)\\
        0
    \end{bmatrix} \mid a(x) \in \mathbb{Z}[x] \right\}$ is a prime submodule of $\mathbb{Z}[x] \times \mathbb{Z}[x]$. We define derivation $\delta_1,\delta_2:\mathbb{Z}[x] \rightarrow \mathbb{Z}[x]$ by $\delta_1(a(x))=\delta(a_0+a_1x+a_2x^2+\cdots +a_nx^n)=a_1+2a_2x+\cdots+na_nx^{n-1}=a'(x)$ and $\delta_2(a(x))=qa'(x)$ for every $a(x)=a_0+a_1x+a_2x^2+\cdots+a_nx^n \in \mathbb{Z}[x]$ and fixed element $q \in \mathbb{Z}$. 
    We define  module homomorphism $f_1,f_2:\mathbb{Z}[x] \times \mathbb{Z}[x] \rightarrow \mathbb{Z}[x] \times \mathbb{Z}[x]$ over $\mathbb{Z}[x]$ by $f_1 \left( \begin{bmatrix}
        a(x)\\
        b(x)
    \end{bmatrix} \right) = \begin{bmatrix}
        a(x)\\
        0
    \end{bmatrix}$ and $f_2 \left( \begin{bmatrix}
        a(x)\\
        b(x)
    \end{bmatrix} \right) =\begin{bmatrix}
        \frac{p}{q} a(x)\\
         0
    \end{bmatrix}$ for every $\begin{bmatrix}
        a(x)\\
        b(x)
    \end{bmatrix} \in \mathbb{Z}[x] \times \mathbb{Z}[x]$ and fixed element $p,q \in \mathbb{Z}$.

    Moreover, $d_i : \mathbb{Z}[x] \times \mathbb{Z}[x] \rightarrow \mathbb{Z}[x] \times \mathbb{Z}[x]$  is a $(\delta_i,f_i)$-derivation, $i=1,2$, where $d_1 \left( \begin{bmatrix}
        a(x)\\
        b(x)
    \end{bmatrix} \right) = \begin{bmatrix}
        a'(x)\\
       0
    \end{bmatrix}$ and $d_2 \left( \begin{bmatrix}
        a(x)\\
        b(x)
    \end{bmatrix} \right) = \begin{bmatrix}
        pa'(x) + a(x)\\
        0
    \end{bmatrix}$ for every $\begin{bmatrix}
        a(x)\\
        b(x)
    \end{bmatrix} \in \mathbb{Z}[x] \times \mathbb{Z}[x]$ and fixed element $p \in \mathbb{Z}$. We will show that if $d_1(M) \subseteq L, d_2 (M) \subseteq L$ and $\delta_1(R) \nsubseteq (L:M), \delta_2(R) \nsubseteq (L:M)$ then $d_1d_2(M) \subseteq L$ and $\delta_1 \delta_2 (R) \nsubseteq (L:M)$.\\
    Let $\begin{bmatrix}
        a(x)\\
        b(x)
    \end{bmatrix} \in \mathbb{Z}[x] \times \mathbb{Z}[x]$ and $c(x) \in \mathbb{Z}[x]$, we obtain
    \begin{align*}
        d_1d_2 \left( \begin{bmatrix}
        a(x)\\
        b(x)
    \end{bmatrix} \right)
    &= d_1 \left( d_2 \left( \begin{bmatrix}
        a(x)\\
        b(x)
    \end{bmatrix} \right) \right) 
    = d_1 \left( \begin{bmatrix}
        pa'(x)+a(x)\\
        0
    \end{bmatrix} \right)
    = \begin{bmatrix}
        pa''(x)a'(x)\\
        0
    \end{bmatrix} \in L
    \end{align*}
    and
    \begin{align*}
        \delta_1\delta_2(c(x))= \delta_1(\delta_2(c(x)))= \delta_1(qc'(x))=qc''(x) \nsubseteq (L:M).
    \end{align*}
    Therefore, if $d_1(M) \subseteq L, d_2 (M) \subseteq L$ and $\delta_1(R) \nsubseteq (L:M), \delta_2(R) \nsubseteq (L:M)$, but $d_1d_2(M) \subseteq L$ and $\delta_1 \delta_2 (R) \nsubseteq (L:M)$.
\end{example}

We give some conditions of the composition of a $(\delta,f)$-derivation $d$ on $M$ and a module endomorphism $\gamma$ to be a homomorphism.

\begin{lemma} \label{remark3.1.2}
   Let $R$ be a ring, $M$ be a right module over ring $R$ and $L$ a prime submodule of $M$ such that $M/P$ is $2$-torsion free over $\tilde{R}$. Let  $\delta$ be a derivation on $R$, $f$ an epimorphism on $M$  and  $d:M \rightarrow M$ a $(\delta,f)$-derivation on $M$. Given $\gamma \in End_R(M)$ such that satisfies $d\gamma=0$. Then one of the following conditions holds:
    \begin{enumerate}
        \item $d=0$,
        \item $\gamma = 0$,
        \item $d\ne 0, \gamma \ne 0$ and $d \in End_R(M)$.
    \end{enumerate} 
\end{lemma}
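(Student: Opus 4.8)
The plan is to deduce this lemma from Theorem \ref{teo3.1.2} by regarding the endomorphism $\gamma$ as a degenerate $(\delta,f)$-derivation. First I would observe that any $\gamma \in End_R(M)$ is automatically a $(\delta_2,f_2)$-derivation once we take $\delta_2$ to be the zero derivation and $f_2 = f$: the defining identity $\gamma(xa)=\gamma(x)a$ can be rewritten as $\gamma(xa)=\gamma(x)a+f(x)\delta_2(a)$, since $\delta_2(a)=0$. Thus, setting $d_1=d$, $(\delta_1,f_1)=(\delta,f)$ and $d_2=\gamma$, $(\delta_2,f_2)=(0,f)$, we obtain two $(\delta_i,f_i)$-derivations on $M$ whose $f_i$ are epimorphisms, exactly the data required by Theorem \ref{teo3.1.2}.

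Next I would translate the hypothesis $d\gamma=0$ into the input of that theorem. Since $d_1d_2=d\gamma=0$ we certainly have $d_1d_2(M)=0\subseteq L$, and since $\delta_1\delta_2=\delta\circ 0=0$ we have $\delta_1\delta_2(R)=0\subseteq (L:M)$. Both hypotheses of Theorem \ref{teo3.1.2} are therefore met (recall $L$ is prime and $M/L$ is $2$-torsion free over $\tilde R$, and $(L:M)$ is a prime ideal by Lemma \ref{idealprima}), and applying it yields one of three alternatives: (i) $d(M)\subseteq L$; (ii) $\gamma(M)\subseteq L$; or (iii) $d(M),\gamma(M)\nsubseteq L$ together with $\delta(R)\subseteq (L:M)$, the companion condition $\delta_2(R)=0\subseteq (L:M)$ being automatic.

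Finally I would convert alternative (iii) into the endomorphism statement. Because $(L:M)=Ann_R^r(M/L)$, the inclusion $\delta(R)\subseteq (L:M)$ forces $f(x)\delta(a)\in L$ for all $x\in M$ and $a\in R$, whence $d(xa)=d(x)a+f(x)\delta(a)\equiv d(x)a \pmod L$; read modulo $L$ this says precisely that the map induced by $d$ on $M/L$ is an $\tilde R$-module homomorphism, while $d(M),\gamma(M)\nsubseteq L$ say that this map and the one induced by $\gamma$ are nonzero. Reading (i)--(iii) in the quotient $M/L$ thus gives exactly conditions $(1)$--$(3)$ (which collapse to the literal statements when $L=0$, i.e.\ when $M$ is prime). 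The argument is essentially bookkeeping, so there is no serious computational obstacle; the only points demanding care are the correct casting of $\gamma$ as a $(0,f)$-derivation so that Theorem \ref{teo3.1.2} genuinely applies, and the passage from $\delta(R)\subseteq (L:M)$ to the endomorphism conclusion $d\in End_R(M)$ modulo $L$.
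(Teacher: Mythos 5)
Your reduction is essentially the proof the paper intends: Lemma \ref{remark3.1.2} is stated without proof as the companion of Corollary \ref{remark3.1.1}, and just as that corollary comes from Theorem \ref{teo3.1.1} by viewing the endomorphism $\gamma$ as a degenerate derivation, the present lemma is meant to follow from Theorem \ref{teo3.1.2} by exactly your specialization $d_1=d$, $(\delta_1,f_1)=(\delta,f)$, $d_2=\gamma$, $(\delta_2,f_2)=(0,f)$. Your verification of the hypotheses is correct and complete: $\gamma(xa)=\gamma(x)a+f(x)\cdot 0$ shows $\gamma$ is a $(0,f)$-derivation with epimorphic $f_2$, while $d_1d_2(M)=d\gamma(M)=0\subseteq L$ and $\delta_1\delta_2(R)=0\subseteq (L:M)$ put you squarely in the theorem's setting, with the Creedon alternative $\delta_2(R)\subseteq (L:M)$ holding automatically.

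Two points in your final translation step deserve flagging, and you are slightly too generous to the paper on the first. Theorem \ref{teo3.1.2} delivers only the mod-$L$ statements $d(M)\subseteq L$, $\gamma(M)\subseteq L$, or $d(M),\gamma(M)\nsubseteq L$ together with $\delta(R)\subseteq (L:M)$, whereas the lemma's printed conclusions are absolute: $d=0$, $\gamma=0$, or $d\in End_R(M)$, the last meaning $f(x)\delta(a)=0$ for all $x\in M$, $a\in R$, not merely $f(x)\delta(a)\in L$. These coincide only when $L=0$, i.e.\ when $M$ itself is prime, as you note parenthetically; so for a general prime submodule $L$ your argument (and any argument routed through Theorem \ref{teo3.1.2}) proves a strictly weaker statement than the one printed. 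The defect lies in the paper's formulation (note also its typo $M/P$ for $M/L$), but a complete write-up must say explicitly either that conditions $(1)$--$(3)$ are to be read modulo $L$ or that $L=0$ is being assumed, rather than asserting that the quotient reading ``gives exactly'' the stated conditions. Second, your phrase ``the map induced by $d$ on $M/L$'' is not licensed: $d$ descends to $M/L$ only if $d(L)\subseteq L$, and that is not available here --- for $l\in L$ one controls $f(l)\delta(a)\in L$ but has no control over $d(l)$. The correct statement in case (iii) is simply the congruence $d(xa)\equiv d(x)a \pmod{L}$ for all $x\in M$ and $a\in R$, without packaging it as a well-defined endomorphism of $M/L$.
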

    
The following example shows that the conditions in Lemma \ref{remark3.1.2} are necessary.

\begin{example}\label{example2.4}
    Consider the polynomial ring $\mathbb{Z}[x]$, the right polynomial module $\mathbb{Z}[x] \times \mathbb{Z}[x]$ over $\mathbb{Z}[x]$ and $L = \left\{ \begin{bmatrix}
        a(x)\\
        0
    \end{bmatrix} \mid a(x) \in \mathbb{Z}[x] \right\}$ is prime submodule of $\mathbb{Z}[x] \times \mathbb{Z}[x]$. We define derivation $\delta_1:\mathbb{Z}[x] \rightarrow \mathbb{Z}[x]$ on $\mathbb{Z}[x]$, a module homomorphism $f_1: \mathbb{Z}[x]^2 \rightarrow \mathbb{Z}[x]^2$ over $\mathbb{Z}[x]$ and $(\delta_1,f_1)$-derivation $d_1:\mathbb{Z}[x] \times \mathbb{Z}[x] \rightarrow \mathbb{Z}[x] \times \mathbb{Z}[x]$ as in the Example \ref{example2.3}. We define an  endomorphism $\gamma: \mathbb{Z}[x] \times \mathbb{Z}[x] \rightarrow \mathbb{Z}[x] \times \mathbb{Z}[x]$ over $\mathbb{Z}[x]$ by $\gamma \left( \begin{bmatrix}
        a(x)\\
        b(x)
    \end{bmatrix} \right) = \begin{bmatrix}
        2a(x)+3b(x)\\
        0
    \end{bmatrix}$ for every $\begin{bmatrix}
        a(x)\\
        b(x)
    \end{bmatrix} \in \mathbb{Z}[x]^2$. We will show that if $d_1 \ne 0, \gamma \ne 0$ and $d \notin End_R(M)$ then $d\gamma \ne 0$.\\
    Let $\begin{bmatrix}
        a(x)\\
        b(x)
    \end{bmatrix} \in \mathbb{Z}[x] \times \mathbb{Z}[x]$, we obtain
    \begin{align*}
        d_1\gamma \left(\begin{bmatrix}
        a(x)\\
        b(x)
    \end{bmatrix} \right) = d_1 \left( \gamma \left( \begin{bmatrix}
        a(x)\\
        b(x)
    \end{bmatrix} \right) \right) = d_1 \left( \begin{bmatrix}
        2a(x)+3b(x)\\
        0
    \end{bmatrix} \right) = \begin{bmatrix}
        2a'(x) +3b'(x)\\
        0
    \end{bmatrix}.
    \end{align*} 
    Therefore, if $d_1 \ne 0, \gamma \ne 0$ and $d \notin End_R(M)$ then $d\gamma \ne 0$.
\end{example}

Following is a special case of Theorem \ref{teo3.1.2}, which is one of the main result in \cite{creedon}. 
\begin{corollary}\label{corprime}
    Let $R$ be a ring, $P$ a prime ideal of ring $R$ such that $R/P$ is a $2$-torsion free ring. 
    Let $\delta_i$ a derivation on $R$,  $f_i : R \rightarrow R$ an epimorphism and $d_i$ a  $(\delta_i,f_i)$-derivation on   $R$, $i=1,2$. If $d_1d_2(R) \subseteq P$ and $\delta_1 \delta_2 (R) \subseteq P$ then $d_1(R)\subseteq P$ or $d_2(R) \subseteq P$.
\end{corollary}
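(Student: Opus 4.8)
The plan is to obtain Corollary~\ref{corprime} as the special case $M = R$ of Theorem~\ref{teo3.1.2}, regarding $R$ as a right module over itself and putting the prime ideal $P$ in the role of the prime submodule $L$. The first task is to check that this substitution is legitimate. Since $P$ is in particular a right ideal, it is a submodule of $R_R$, and it is proper because $P$ is a proper ideal. Next I would compute $(P:R) = \{r \in R : Rr \subseteq P\}$; because $1 \in R$ this forces $(P:R) = P$ (one inclusion by evaluating at $m=1$, the other because $P$ is a left ideal). With this identification the submodule-primeness condition ``$mRx \subseteq P \Rightarrow m \in P$ or $x \in (P:R)$'' becomes exactly the ideal-primeness of $P$ (this is the content one isolates, consistent with Lemma~\ref{idealprima}). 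Moreover $\tilde R = R/(P:R) = R/P$, so the hypothesis that $R/P$ is $2$-torsion free is precisely the hypothesis that $M/L = R/P$ is $2$-torsion free over $\tilde R$, while $d_1 d_2(M) \subseteq L$ and $\delta_1\delta_2(R) \subseteq (L:M)$ translate verbatim. Hence Theorem~\ref{teo3.1.2} applies and returns the trichotomy (1)--(3).

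Cases (1) and (2) read $d_1(R) \subseteq P$ and $d_2(R) \subseteq P$, which is exactly the desired conclusion, so the entire difficulty is concentrated in ruling out the third alternative, namely $d_1(R), d_2(R) \nsubseteq P$ together with $\delta_1(R),\delta_2(R) \subseteq P$. Here I would exploit the identity $1 \in R$, the one feature of $M = R$ unavailable for general modules. Writing $u_i = d_i(1)$ and putting $x = 1$ in the defining identity gives $d_i(a) = d_i(1)a + f_i(1)\delta_i(a) \equiv u_i a \pmod{P}$ for all $a$, since $\delta_i(a) \in P$ and $P$ is an ideal. In particular $d_i(P) \subseteq P$, so each $d_i$ descends to left multiplication by $\overline{u_i}$ on $R/P$; composing, $d_1 d_2(a) \equiv u_1 u_2\, a \pmod{P}$, and $d_1 d_2(R) \subseteq P$ forces $u_1 u_2 \in P$, i.e. $\overline{u_1}\,\overline{u_2} = 0$ in the prime ring $R/P$. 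Since $d_i(R) \subseteq P \iff u_i \in P$, the goal reduces to showing $u_1 \in P$ or $u_2 \in P$.

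The step I expect to be the main obstacle is exactly this last one. In a prime ring the relation $\overline{u_1}\,\overline{u_2}=0$ does \emph{not} by itself force a factor to vanish; primeness only bites in the sandwiched form $u_1\,R\,u_2 \subseteq P$. To manufacture the missing middle factor I would feed products $r\,d_2(1)$ into $d_1$ and use right $R$-linearity together with $\delta_1(R) \subseteq P$ to get $d_1\!\left(r\,d_2(1)\right) \equiv u_1\, r\, u_2 \pmod{P}$; if the left-hand side could be shown to lie in $P$ for every $r$, then $u_1 R u_2 \subseteq P$ and primeness would finish the argument. The catch is that the hypothesis $d_1 d_2(R)\subseteq P$ only controls $d_1$ on elements $d_2(1)\,a$, where $d_2(1)$ sits on the \emph{left}, whereas the sandwich needs $d_1$ on elements $r\,d_2(1)$, with the multiplier on the \emph{right}; reconciling these two orderings is the crux. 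I anticipate that closing this gap requires either commutativity of $R/P$ — the setting of Creedon's original theorem for derivations, in which $\overline{u_1}\,\overline{u_2}=0$ does collapse one factor into $P$ — or a further structural hypothesis on the $f_i$ or $d_i$, and this is where I would concentrate the effort and, if unavoidable, sharpen the statement.
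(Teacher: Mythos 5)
Your reduction is exactly the route the paper intends: Corollary \ref{corprime} is stated without any separate proof, as the specialization $M=R$, $L=P$ of Theorem \ref{teo3.1.2}, and your bookkeeping is correct on every point --- $P$ is a proper submodule of $R_R$, the computation $(P:R)=P$ via $1\in R$ and two-sidedness of $P$, the equivalence of submodule-primeness of $P$ in $R_R$ with ideal-primeness, and $\tilde R = R/P$. The obstacle you isolate at the end is therefore not a defect of your attempt but a genuine gap in the corollary itself: Theorem \ref{teo3.1.2} delivers a trichotomy, and its third alternative --- $d_1(R), d_2(R) \nsubseteq P$ together with $\delta_1(R), \delta_2(R) \subseteq P$ --- is silently dropped in the corollary with nothing to exclude it. Your reduction modulo $P$ is the right diagnostic: writing $u_i = d_i(1)$ one gets $d_i(a) \equiv u_i a \pmod P$, the hypotheses yield only $\overline{u_1}\,\overline{u_2}=0$ in the prime ring $R/P$, and, as you suspect, no sandwich $u_1 R u_2 \subseteq P$ can be manufactured from the data.

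In fact the statement as printed is false, so the gap you flag cannot be closed by more cleverness: take $R = M_2(\mathbb{Q})$, $P = 0$ (a prime ideal with $R/P$ $2$-torsion free), $\delta_1=\delta_2=0$, $f_1=f_2=\mathrm{id}_R$, and $d_i(x) = e_{12}x$ with $e_{12}$ the usual matrix unit. Each $d_i$ is a $(\delta_i,f_i)$-derivation, and $d_1d_2 = 0$ since $e_{12}^2=0$, so $d_1d_2(R)\subseteq P$ and $\delta_1\delta_2(R)\subseteq P$, yet $d_i(R) = e_{12}M_2(\mathbb{Q}) \nsubseteq P$ for both $i$; note also $e_{12}Re_{12} = \mathbb{Q}e_{12} \neq 0$, confirming concretely that the middle factor your argument needs is simply not there. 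This also explains why Creedon's original theorem survives while this generalization does not: in Creedon's setting $d_i=\delta_i$ (and $f_i=\mathrm{id}$), so the third alternative is self-contradictory ($\delta_i(R)\subseteq P$ and $\delta_i(R)\nsubseteq P$ simultaneously) and the clean dichotomy follows; once $d_i$ and $\delta_i$ are decoupled, it does not. The honest version of the corollary must either retain the third alternative verbatim from Theorem \ref{teo3.1.2} or add a hypothesis making $\overline{u_1}\,\overline{u_2}=0$ collapse a factor into $P$ (e.g.\ $R/P$ commutative, hence a domain, as in your closing remark). The same defect, for the same reason, afflicts Theorem \ref{thPos1}, whose case (3) with $\delta_1\delta_2 = 0$ and $d_i$ nonzero left multiplications is realized by the example above.
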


\section{Jordan $(\delta,f)$-derivation on modules}

In this section, we assume that $R$ is a commutative ring with identity and $\mathcal{S}$ is an associative algebra over $R$ with identity. We define a new operation called the Jordan product on $\mathcal{S}$, denoted by $\bullet : \mathcal{S} \times S \rightarrow \mathcal{S}$, where $a \bullet b = ab+ba$ for all $a,b \in \mathcal{S}$. Hence, $(\mathcal{S}, +, \bullet)$ is a Jordan algebra over $R$. Let $\mathcal{M}$ be a bimodule over algebra $\mathcal{S}$. 
Considering $\mathcal{S}$ as a Jordan algebra, we define a Jordan action  $\bullet':\mathcal{M} \times \mathcal{S} \rightarrow \mathcal{M}$ for all $m \in \mathcal{M}, s \in \mathcal{S}$, where $m \bullet' s = s\bullet' m = ms+sm$. Therefore, $\mathcal{M}$ is a Jordan bimodule over $\mathcal{S}$.

Using these assumptions, we present the definition of derivation on module $\mathcal{M}$ over $\mathcal{S}$ as following.
\begin{definition}\label{def3.3.(1)} Let $\delta$ be a derivation on $\mathcal{S}$ and $f: \mathcal{S} \rightarrow \mathcal{M}$ a bimodule homomorphism over $\mathcal{S}$. 
    \begin{enumerate}[label=(\roman*)]
    \item An additive mapping $D: \mathcal{S} \rightarrow \mathcal{M}$ is called a $(\delta,f)$-derivation on $\mathcal{M}$ if $D(xy)= D(x)y+f(x)\delta(y)$ for all $x,y \in \mathcal{S}$.
\item An additive mapping $D: \mathcal{S} \rightarrow \mathcal{M}$ is called a Jordan $(\delta,f)$-derivation on $\mathcal{M}$ if $D(x^2)= D(x)x+f(x)\delta(x)$ for all $x \in \mathcal{S}$.
\end{enumerate}
\end{definition}
In case the $\delta$ is clear, we call it as $f$-derivation and Jordan $f$-derivation, respectively.

An example of Jordan $(\delta,f)$-derivation on module is presented here.
\begin{example}\label{ex3.3.1}
    We consider $\mathcal{S}=M_2 (\mathbb{Q})$ as an associative algebra over $\mathbb{Q}$ with an action is defined by the multiplicative $Aq$ for all $A \in \mathcal{S} $ and $q \in \mathbb{Q}$. Moreover, $\mathcal{M}=M_2 (\mathbb{Q})$ is a bimodule over $\mathcal{S}$, where $X \cdot A = XA$ and $A \cdot X=AX$ for all $X \in \mathcal{S}$ and $A \in \mathcal{M}$. Let $B_0$ be a fixed matrix in $M_2 (\mathbb{Q})$.  We define a derivation $\delta$ on $\mathcal{S}$, where $\delta (A)=[B_0,A]=B_0A-AB_0$ for all $A \in \mathcal{S}$. Given a bimodule homomorphism $f: \mathcal{S} \rightarrow \mathcal{M}$ over $\mathcal{S}$ by $f(A) = -A$ for all  $A \in \mathcal{S}$. We can prove that $D: \mathcal{S} \rightarrow \mathcal{M}$ is a Jordan $(\delta,f)$-derivation on module $\mathcal{M}$ over $\mathcal{S}$ by $D(A) = AB_0$ for all $A \in \mathcal{S}$.
\end{example}

The following lemma shows that every $(\delta,f)$-derivation $D$ on Jordan bimodule $\mathcal{M}$ is a Jordan $(\delta,f)$-derivation on $\mathcal{M}$.

\begin{lemma}\label{lemma3.3.1}
    Let $R$ be a commutative ring with identity, $\mathcal{S}$ an associative algebra over $R$ with identity and $\mathcal{M}$ a $2$-torsion free bimodule over $\mathcal{S}$. Let $\delta$ be a derivation on $\mathcal{S}$ and $f : \mathcal{S} \rightarrow \mathcal{M}$ a bimodule homomorphism over $\mathcal{S}$. If $D : \mathcal{S} \rightarrow \mathcal{M}$ is a $(\delta,f)$-derivation  on Jordan bimodule $\mathcal{M}$ where $D(x \bullet y) = D(x) \bullet' y + f(x) \bullet' \delta (y)$ for all $x,y \in \mathcal{S} $, then $D$ is a Jordan $(\delta,f)$-derivation on $\mathcal{M}$, i.e. $D(x^2)=D(x)x+f(x)\delta(x)$ for all $x\in \mathcal{S}$.
\end{lemma}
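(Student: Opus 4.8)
The plan is to reduce the two-variable defining identity to a one-variable statement by the specialization $y=x$, since the conclusion $D(x^2)=D(x)x+f(x)\delta(x)$ involves only a single element. Concretely, I would begin by substituting $y=x$ into the hypothesis $D(x \bullet y)=D(x)\bullet' y+f(x)\bullet'\delta(y)$, obtaining $D(x\bullet x)=D(x)\bullet' x+f(x)\bullet'\delta(x)$. No use of the general bilinear identity should be needed, because the target is purely about $x^2$.

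Next I would evaluate each side separately. On the left, the Jordan product gives $x\bullet x=xx+xx=2x^2$, so additivity of $D$ yields $D(x\bullet x)=D(2x^2)=2D(x^2)$. On the right, I would unfold the Jordan action through its definition $m\bullet' s=ms+sm$, so that $D(x)\bullet' x=D(x)x+xD(x)$ and $f(x)\bullet'\delta(x)=f(x)\delta(x)+\delta(x)f(x)$. Collecting the four resulting terms produces a single identity whose left-hand side is $2D(x^2)$ and whose right-hand side is the sum of these contributions.

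The concluding step is to exploit the hypothesis that $\mathcal{M}$ is $2$-torsion free over $\mathcal{S}$. After rearranging the right-hand side so that it is visibly twice the expression $D(x)x+f(x)\delta(x)$, the $2$-torsion free condition permits cancellation of the common factor $2$, yielding $D(x^2)=D(x)x+f(x)\delta(x)$, which is exactly the Jordan $(\delta,f)$-derivation identity of Definition \ref{def3.3.(1)}(ii). This is the only place where the torsion hypothesis on $\mathcal{M}$ enters, and it is indispensable: with mere $2$-torsion-freeness (as opposed to invertibility of $2$) one can only cancel a factor of $2$ that is already present on both sides.

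I expect the main obstacle to lie precisely in the rearrangement hidden in the penultimate step. Because the Jordan action $\bullet'$ is symmetric by construction, unfolding it produces both the right-action terms $D(x)x$ and $f(x)\delta(x)$ and the left-action terms $xD(x)$ and $\delta(x)f(x)$, whereas the stated conclusion is written purely in terms of right actions. The delicate point is therefore to match this two-sided output against the one-sided form of the target, i.e.\ to argue that $xD(x)+\delta(x)f(x)$ and $D(x)x+f(x)\delta(x)$ contribute symmetrically so that the full right-hand side really is $2\,(D(x)x+f(x)\delta(x))$. This is the step I would scrutinize most carefully, checking that the bimodule compatibilities and the Jordan-module structure set up immediately before the lemma genuinely force that symmetry, since without it the cancellation of the factor $2$ would not deliver the one-sided expression demanded by the statement.
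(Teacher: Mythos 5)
Your proposal is, step for step, the paper's own proof: substitute $y=x$ into the hypothesis $D(x\bullet y)=D(x)\bullet' y+f(x)\bullet'\delta(y)$, unfold both Jordan operations, and cancel the factor $2$ using the $2$-torsion-freeness of $\mathcal{M}$.

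However, the step you single out for scrutiny is a genuine gap---and it is present, unresolved, in the paper's proof as well. Substituting $y=x$ yields only
\[
2D(x^2)=D(x)x+xD(x)+f(x)\delta(x)+\delta(x)f(x),
\]
and the paper passes from this directly to $2D(x^2)=2D(x)x+2f(x)\delta(x)$ without comment, i.e., it silently uses $xD(x)=D(x)x$ and $\delta(x)f(x)=f(x)\delta(x)$. Nothing in the hypotheses supplies this: the ``symmetry'' of the Jordan action is only the definitional statement $m\bullet' s=s\bullet' m$ for the pairing, not the statement $ms=sm$ in the bimodule, and $\mathcal{M}$ is an arbitrary bimodule over the (noncommutative) algebra $\mathcal{S}$. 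What actually follows from the hypothesis together with $2$-torsion-freeness is only the two-sided identity $2D(x^2)=D(x)\bullet' x+f(x)\bullet'\delta(x)$, whose right-hand side is not exhibited as twice anything, so torsion-freeness gives no further reduction---as you yourself observe, one can only cancel a $2$ already present on both sides. The one-sided conclusion $D(x^2)=D(x)x+f(x)\delta(x)$ is equivalent to the extra identity $\bigl(xD(x)-D(x)x\bigr)+\bigl(\delta(x)f(x)-f(x)\delta(x)\bigr)=0$, which must either be added as a hypothesis (e.g., that $\mathcal{M}$ is a symmetric bimodule, $ms=sm$ for all $m\in\mathcal{M}$, $s\in\mathcal{S}$) or absorbed by restating the lemma's conclusion in terms of $\bullet'$. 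So your instinct was exactly right: the bimodule structure set up before the lemma does not force the needed symmetry, and as written neither your argument nor the paper's closes this step.
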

\begin{proof}
    Suppose $D$ is a $(\delta,f)$-derivation on Jordan bimodule $\mathcal{M}$. For any $x,y \in \mathcal{S}$ we have $D(x \bullet y) = D(x) \bullet' y + f(x) \bullet' \delta(y)$. Now we show that $D(x^2) = D(x)x+f(x)\delta (x)$ by considering the following:
    \begin{align*}
        D(x \bullet y) &= D(x) \bullet' y + f(x) \bullet' \delta (y)\\
        D(xy+yx) &= D(x)y + yD(x) + f(x)\delta (y) + \delta (y) f(x).
    \end{align*}
    Let $y=x$, obtain
    \begin{align*}
         &D(x^2+ x^2) = D(x)x + xD(x) + f(x)\delta (x) + \delta (x) f(x)\\
         &2D(x^2) = 2D(x)x + 2f(x)\delta (x)\\
         &2(D(x^2) - D(x)x - f(x) \delta(x)) = 0\\
         &D(x^2)= D(x)x+f(x)\delta (x).
    \end{align*}
    Thus, if $D$ is a $(\delta,f)$-derivation on Jordan bimodule $\mathcal{M}$, then $D$ is a Jordan $(\delta,f)$-derivation on $\mathcal{M}$.
\end{proof}

By Definition \ref{def3.3.(1)}, if $D$ is a $(\delta,f)$-derivation, then $D$ is also a Jordan $(\delta,f)$-derivation. But the converse is not always true. Hence, a sufficient condition is provided to show that if every $D$ is a Jordan $(\delta,f)$-derivation on $\mathcal{M}$ then $D$ is a $(\delta,f)$-derivation on $\mathcal{M}$. Before the result, we show some important properties which are useful.  
\begin{lemma}
     Let $\mathcal{M}$ be a $2$-torsion free bimodule over $\mathcal{S}$, $\delta$ a derivation on $\mathcal{S}$ and $f : \mathcal{S} \rightarrow \mathcal{M}$ a bimodule homomorphism over $\mathcal{S}$. If $D:\mathcal{S} \rightarrow \mathcal{M}$ is a Jordan $(\delta,f)$-derivation on $\mathcal{M}$, then $D(b)a+f(b)\delta(a)=bD(a)+\delta(b)f(a)$ for all $a,b \in \mathcal{S}$.
\end{lemma}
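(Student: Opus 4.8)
The plan is to polarize the defining Jordan identity and extract the stated relation from the resulting symmetric identity. First I would substitute $x=a+b$ into $D(x^2)=D(x)x+f(x)\delta(x)$. Expanding $(a+b)^2=a^2+ab+ba+b^2$, using additivity of $D$, and subtracting off the two instances $D(a^2)=D(a)a+f(a)\delta(a)$ and $D(b^2)=D(b)b+f(b)\delta(b)$, I would obtain the linearized identity
\[
D(ab+ba)=D(a)b+D(b)a+f(a)\delta(b)+f(b)\delta(a)
\]
for all $a,b\in\mathcal{S}$; the $2$-torsion-freeness of $\mathcal{M}$ is kept in reserve to clear the factor $2$ that appears whenever a symmetric substitution $y=x$ duplicates a term.

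Next I would recast the goal. Modulo the linearized identity, the target $D(b)a+f(b)\delta(a)=bD(a)+\delta(b)f(a)$ is equivalent to the splitting
\[
D(ab+ba)=\bigl(D(a)b+f(a)\delta(b)\bigr)+\bigl(bD(a)+\delta(b)f(a)\bigr),
\]
that is, to writing $D(ab+ba)$ as the sum of the right-handed expression $D(a)b+f(a)\delta(b)$ (the value $D(ab)$ should take if $D$ were already a $(\delta,f)$-derivation) and a left-handed counterpart $bD(a)+\delta(b)f(a)$. To manufacture the left-module terms $bD(a)$ and $\delta(b)f(a)$ I would bring in the two hypotheses not yet used: the identity $1\in\mathcal{S}$ and the fact that $f$ is a bimodule homomorphism. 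The latter yields $f(s)=sf(1)=f(1)s$ for every $s$, so $f(1)$ commutes with the $\mathcal{S}$-action and a right occurrence such as $f(b)\delta(a)=f(1)b\delta(a)$ can be transported to the left. Specializing the linearized identity at $b=1$ and using $\delta(1)=0$ gives $D(a)=D(1)a+f(1)\delta(a)$, which should then let me trade right actions for left actions systematically.

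The step I expect to be the main obstacle is precisely this left/right reconciliation. The Jordan hypothesis is intrinsically right-handed — it reads $D(x)x$ and never $xD(x)$ — so every identity produced by polarization lives on the right-module side, whereas the target equates a right-module expression with a left-module one. After the $f(1)$-terms are moved across using $f(1)s=sf(1)$, the discrepancy collapses to a comparison between $D(1)\,ba$ and $b\,D(1)\,a$, so the argument ultimately hinges on $D(1)$ commuting with the $\mathcal{S}$-action on $\mathcal{M}$. Discharging this residual commutator is where the real work lies: I would look to faithfulness or surjectivity of $f$ (or primeness-type inputs in the spirit of the earlier sections) to force the relevant commutator to annihilate $\mathcal{M}$, and only then would the stated equality follow.
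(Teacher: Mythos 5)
Your polarization step matches the paper's opening move exactly: substituting $x+y$ into the Jordan identity yields $D(ab+ba)=D(a)b+D(b)a+f(a)\delta(b)+f(b)\delta(a)$, and your subsequent reduction — via $D(a)=D(1)a+f(1)\delta(a)$ and $f(a)=f(1)a=af(1)$ — of the target identity to the commutation of $D(1)$ with the $\mathcal{S}$-action is correct. But the obstruction you isolate is not a technical step awaiting a clever input from faithfulness, surjectivity of $f$, or primeness: none of those hypotheses is available in the lemma (it assumes only that $\mathcal{M}$ is a $2$-torsion free bimodule), and the obstruction is genuinely undischargeable. Your reduction in fact shows the lemma is \emph{equivalent} to $D(1)b=bD(1)$ for all $b\in\mathcal{S}$, and this fails: take $\delta=0$, $f=0$ and $D(x)=m_0x$ for a fixed $m_0\in\mathcal{M}$ not commuting with the action (e.g.\ $\mathcal{S}=\mathcal{M}=M_2(\mathbb{Q})$ with $m_0$ non-central); this $D$ satisfies $D(x^2)=D(x)x+f(x)\delta(x)$, yet $D(b)a+f(b)\delta(a)=m_0ba$ differs from $bD(a)+\delta(b)f(a)=bm_0a$ in general. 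Even the paper's own Example \ref{ex3.3.1} violates the conclusion: there $D(b)a+f(b)\delta(a)=baB_0$ while $bD(a)+\delta(b)f(a)=baB_0+bB_0a-B_0ba$, and these differ unless $B_0$ is central.

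The reason the paper's proof appears to succeed where you (rightly) stalled is that, after the same polarization, it invokes the identity $D(x\bullet y)=D(x)\bullet' y+f(x)\bullet'\delta(y)$, citing Lemma \ref{lemma3.3.1} — but that identity is the \emph{hypothesis} of Lemma \ref{lemma3.3.1}, not its conclusion, so the citation runs the implication backwards. That $\bullet$-compatibility is strictly stronger than the one-sided Jordan condition $D(x^2)=D(x)x+f(x)\delta(x)$: setting $x=1$ in it forces exactly $D(1)y=yD(1)$, i.e.\ it kills your residual commutator, which is why both the paper's one-line finish and your reduction would go through under that added assumption, while under the lemma's stated hypotheses the claim is false. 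Your instinct that the left/right reconciliation is where the real work lies is precisely the point; the honest conclusion is that it cannot be carried out without strengthening the definition — e.g.\ adopting a symmetric Jordan condition in the spirit of Herstein's two-sided $d(a^2)=d(a)a+ad(a)$ — rather than the right-handed one used here.
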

\begin{proof}
      Suppose $D: \mathcal{S} \rightarrow \mathcal{M}$ is a Jordan $(\delta,f)$-derivation on $\mathcal{M}$. Take any $x,y \in \mathcal{S}$, we obtain
    \begin{align}
        &D(x^2+x \bullet y+ y^2) \nonumber \\
        &= D((x+y)^2) \nonumber \\
        &=D(x+y)(x+y) + f(x+y)\delta (x+y) \nonumber \\
        &= D(x)x+D(x)y+D(y)x+D(y)y+f(x)\delta (x)+f(x)\delta(y)+f(y)\delta(x)+f(y)\delta(y). \label{eq3.3.1}
    \end{align}
    On the other hand
    \begin{align}
        &D(x^2+x \bullet y + y^2) \nonumber \\
        &= D(x^2) + D(x \bullet y) + D(x^2) \nonumber \\
        &= D(x)x+f(x)\delta (x) + D(x \bullet y) + D(y)y+f(y)\delta(y). \label{eq3.3.2}
    \end{align}
    Based on Equations (\ref{eq3.3.1}) and (\ref{eq3.3.2}) it follows that
    \begin{align*}
        D(x \bullet y) = D(x)y + D(y)x+f(x)\delta(y) + f(y)\delta (x).
    \end{align*}
    According to Lemma \ref{lemma3.3.1}, if $D(x \bullet y)=D(x) \bullet' y+f(x) \bullet' \delta(y)$ for all $x,y \in \mathcal{S}$ then $D$ is a Jordan $(\delta,f)$-derivation on  $\mathcal{M}$. So
    \begin{align*}
        D(x)y + D(y)x+f(x)\delta(y) + f(y)\delta (x) = D(x)y+yD(x)+f(x)\delta(y)+\delta(y)f(x).
    \end{align*}
    Therefore
    \begin{align*}
        D(y)x+f(y)\delta(x) = yD(x)+\delta(y)f(x).
    \end{align*}
\end{proof}

\begin{lemma}\label{Lemma3.3.3}
    Let $\mathcal{M}$ be a $2$-torsion free bimodule over $\mathcal{S}$, $\delta$ a derivation on $\mathcal{S}$ and $f : \mathcal{S} \rightarrow \mathcal{M}$ a bimodule homomorphism over $\mathcal{S}$. If $D: \mathcal{S} \rightarrow \mathcal{M}$ is a Jordan $(\delta,f)$-derivation on $\mathcal{M}$, then $D(xyx)= D(x)yx + f(x)\delta (y)x + f(x)y\delta(x)$ for all $x,y \in \mathcal{S}$.
\end{lemma}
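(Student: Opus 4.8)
The plan is to reduce the triple product $xyx$ to Jordan products, for which the action of $D$ is already under control through the linearised identity established in the preceding lemma. The crucial observation is the Jordan triple identity
\begin{align*}
2xyx = x \bullet (x \bullet y) - x^2 \bullet y,
\end{align*}
which one verifies directly, since $x\bullet(x\bullet y) = x^2y + 2xyx + yx^2$ and $x^2 \bullet y = x^2 y + yx^2$. Applying $D$ and using additivity gives $2D(xyx) = D\big(x\bullet(x\bullet y)\big) - D(x^2\bullet y)$, so the entire task becomes expanding the two terms on the right.

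First I would record the linearised identity obtained in the proof of the preceding lemma, namely
\begin{align*}
D(u \bullet v) = D(u)v + D(v)u + f(u)\delta(v) + f(v)\delta(u)
\end{align*}
for all $u,v \in \mathcal{S}$. I will also need three elementary inputs: the Jordan property $D(x^2) = D(x)x + f(x)\delta(x)$; the Leibniz rule $\delta(ab) = \delta(a)b + a\delta(b)$; and the bimodule-homomorphism relations $f(ab) = f(a)b = af(b)$ together with $f(x^2)=f(x)x$, which hold because $f$ is an $\mathcal{S}$-$\mathcal{S}$-bimodule map.

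Then I would expand $D\big(x\bullet(x\bullet y)\big)$ by taking $u=x$ and $v = x\bullet y$ in the linearised identity and substituting the expansions of $D(x\bullet y)$, of $\delta(x\bullet y)$ via Leibniz, and of $f(x\bullet y)$; separately I would expand $D(x^2 \bullet y)$ by taking $u = x^2$ and $v=y$ and inserting $D(x^2)$, $f(x^2)$ and $\delta(x^2)$. Upon subtracting, all terms built from $D(x)xy$, $D(y)x^2$, $f(x)\delta(x)y$, $f(x)x\delta(y)$, $f(y)\delta(x)x$ and $f(y)x\delta(x)$ cancel in pairs, and what survives is exactly
\begin{align*}
2D(x)yx + 2f(x)\delta(y)x + 2f(x)y\delta(x).
\end{align*}
Equating this with $2D(xyx)$ and invoking that $\mathcal{M}$ is $2$-torsion free to cancel the factor $2$ yields the claimed formula.

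The main obstacle is purely the bookkeeping: the two expansions produce six summands each, so roughly a dozen terms must be tracked, and one has to be consistent about whether to write $f(ab)$ as $f(a)b$ or as $af(b)$ so that the intended cancellations actually line up. The only conceptual ingredients are the triple-product identity and the linearised Jordan formula; once these are fixed the computation is forced, and $2$-torsion freeness is precisely what is needed to descend from $2D(xyx)$ to $D(xyx)$.
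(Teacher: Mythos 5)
Your proposal is correct and follows essentially the same route as the paper: the paper also evaluates $D\bigl(x \bullet (x \bullet y)\bigr)$ in two ways via the decomposition $x \bullet (x \bullet y) = x^2 \bullet y + 2xyx$, cancels the common terms, and invokes $2$-torsion freeness to remove the factor $2$. The only cosmetic difference is that you expand using the one-sided linearised identity $D(u \bullet v) = D(u)v + D(v)u + f(u)\delta(v) + f(v)\delta(u)$, whereas the paper writes the expansions with the two-sided Jordan action $\bullet'$ (implicitly using the symmetry relation $D(y)x + f(y)\delta(x) = yD(x) + \delta(y)f(x)$ from the preceding lemma), which amounts to the same computation.
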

\begin{proof}
    Let $x,y \in \mathcal{S}$. By the assumption, $D$ is a Jordan $(\delta,f)$-derivation. We have
    \begin{align}
        &D(x \bullet ( x \bullet y)) \nonumber \\
        &= D(x)\bullet' (x \bullet y) + f(x) \bullet' \delta (x \bullet y)\nonumber \\
        &= D(x) \bullet' (xy+yx) + f(x) \bullet' (\delta (x) \bullet y+ x \bullet \delta(y)) \nonumber \\
        &= D(x) (xy+yx) + (xy+yx) D(x) + f(x) \bullet' (\delta(x)y+y\delta(x)+x\delta(y)+\delta(y)x)\nonumber \\
        &= D(x)xy+D(x)yx +xyD(x) + yxD(x) + f(x)\delta(x)y+f(x)y\delta(x)+f(x)x\delta(y)\nonumber\\
        &\quad +f(x)\delta(y)x+\delta(x)yf(x)+y\delta(x)f(x)+x\delta(y)f(x)+\delta(y)xf(x).\label{eq3.3.3}
    \end{align}
    On the other hand
    \begin{align}
        &D(x \bullet ( x \bullet y)) \nonumber \\
        &= D(x^2 \bullet y) +2D(xyx) \nonumber \\
        &= D(x^2) \bullet' y + f(x^2) \bullet' \delta(y) + 2D(xyx)\nonumber \\
        &= D(x^2) y + yD(x^2) + f(x^2) \delta (y) + \delta (y) f(x^2) + 2D(xyx) \nonumber \\
        &= D(x)xy+f(x)\delta(x)y + yD(x)x+yf(x)\delta(x) + f(x)x\delta(y) + \delta(y)f(x)x+2D(xyx)\label{eq3.3.4}
    \end{align}
    Based on Equations (\ref{eq3.3.3}) and (\ref{eq3.3.4}), it follows that 
    \begin{align*}
        &2D(xyx) = 2D(x)yx+2f(x)\delta (y) x+ 2 f(x)y\delta (x)\\
        &2(D(xyx)-D(x)yx-f(x)\delta (y)x- f(x)y\delta (x)) = 0\\
        &D(xyx) = D(x)yx + f(x)\delta (y)x + f(x)y\delta (x).
    \end{align*}
\end{proof}

\begin{lemma}\label{lemma3.3.4}
    Let $\mathcal{M}$ be a $2$-torsion free bimodule over $\mathcal{S}$, $\delta$ a derivation on $\mathcal{S}$ and $f : \mathcal{S} \rightarrow \mathcal{M}$ a bimodule homomorphism over $\mathcal{S}$. If $D:\mathcal{S} \rightarrow \mathcal{M}$ is a Jordan $(\delta,f)$-derivation on $\mathcal{M}$, then
    \begin{align*}
        D(xyz+zyx)= D(x)yz + D(z)yx+f(x)\delta (y) z+ f(z)\delta (y)x + f(x) y \delta (z) + f(z)y \delta (x)
    \end{align*}
    for all $x,y,z \in \mathcal{S}$.
\end{lemma}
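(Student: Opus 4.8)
The plan is to derive the identity by \emph{linearizing} Lemma~\ref{Lemma3.3.3} in its first variable, i.e. by substituting $x \mapsto x+z$ and then using the additivity of $D$, of the bimodule homomorphism $f$, and of the derivation $\delta$.

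First I would write down Lemma~\ref{Lemma3.3.3} with $x$ replaced by $x+z$, keeping the middle element $y$ fixed:
\begin{align*}
    D\bigl((x+z)y(x+z)\bigr) &= D(x+z)\,y(x+z) + f(x+z)\,\delta(y)\,(x+z)\\
    &\quad + f(x+z)\,y\,\delta(x+z).
\end{align*}
Expanding the product $(x+z)y(x+z)=xyx+xyz+zyx+zyz$ and invoking additivity of $D$, the left-hand side becomes
\begin{align*}
    D\bigl((x+z)y(x+z)\bigr) = D(xyx) + D(xyz+zyx) + D(zyz).
\end{align*}
On the right-hand side I would distribute, using $D(x+z)=D(x)+D(z)$, $f(x+z)=f(x)+f(z)$ and $\delta(x+z)=\delta(x)+\delta(z)$, which produces twelve terms.

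The decisive step is the cancellation of the ``diagonal'' contributions. Applying Lemma~\ref{Lemma3.3.3} separately to $x$ and to $z$ gives
\begin{align*}
    D(xyx) &= D(x)yx + f(x)\delta(y)x + f(x)y\delta(x),\\
    D(zyz) &= D(z)yz + f(z)\delta(y)z + f(z)y\delta(z),
\end{align*}
and subtracting these two identities removes $D(xyx)$ and $D(zyz)$ from the left-hand side together with the six matching pure-$x$ and pure-$z$ terms on the right. What remains is precisely
\begin{align*}
    D(xyz+zyx) = D(x)yz + D(z)yx + f(x)\delta(y)z + f(z)\delta(y)x + f(x)y\delta(z) + f(z)y\delta(x),
\end{align*}
which is the assertion.

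I do not expect any genuine obstacle here; the only point demanding care is the bookkeeping in the expansion, namely ensuring that the six cross terms coupling $x$ with $z$ survive while the six pure terms cancel against $D(xyx)$ and $D(zyz)$. Note that this linearization introduces no new factor of $2$, so the $2$-torsion freeness of $\mathcal{M}$ enters only indirectly, through the three applications of Lemma~\ref{Lemma3.3.3}.
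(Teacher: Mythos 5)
Your proposal is correct and takes essentially the same route as the paper: both compute $D\bigl((x+z)y(x+z)\bigr)$ in two ways via Lemma~\ref{Lemma3.3.3} --- once by expanding the cube as $xyx+(xyz+zyx)+zyz$ and applying the lemma to the diagonal terms, and once by applying the lemma directly to $x+z$ --- and then cancel the pure-$x$ and pure-$z$ contributions. Your closing observation that $2$-torsion freeness enters only through the applications of Lemma~\ref{Lemma3.3.3}, not through the linearization itself, is also accurate.
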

\begin{proof}
    Let $x,y,z \in \mathcal{S}$. Based on Lemma \ref{Lemma3.3.3}, we obtain
    \begin{align}
        &D((x+z)y(x+z)) \nonumber \\
        &= D(xyx) + D(xyz+zyx) + D(zyz) \nonumber \\
        &= D(x)yx + f(x)\delta (y)x + f(x) y \delta (x) + D(xyz+zyx)+D(z)yz + f(z)\delta (y) z + f(z) y \delta (z). \label{eq3.3.5}
    \end{align}
    On the other hand
    \begin{align}
        &D((x+z)y(x+z)) \nonumber \\
        &= D(x+z)y(x+z)+f(x+z)\delta(y)(x+z)+f(x+z)y\delta(x+z)\nonumber \\
        &= D(x)yx + D(x)yz + D(z)yx + D(z) yz + f(x) \delta (y) x + f(x) \delta (y) z + f(z) \delta (y)x \nonumber \\
        & \quad + f(z)\delta (y) z + (f(x) + f(z))(y \delta (x) + y \delta (z)) \nonumber \\
        &= D(x)yx + D(x) yz + D(z) yx + D(z)yz + f(x) \delta (y) x + f(x)\delta (y) z + f(z)\delta (y) x \nonumber\\
        &\quad +f(z)\delta(y)z+ f(x) y\delta(x)+ f(x) y \delta (z) + f(z)y \delta (x) + f(z) y \delta (z). \label{eq3.3.6}
    \end{align}
   We conclude from Equation (\ref{eq3.3.5}) and (\ref{eq3.3.6}) that
    \begin{align*}
        D(xyz+zyx)= D(x)yz + D(z)yx+f(x)\delta (y) z+ f(z)\delta (y)x + f(x) y \delta (z) + f(z)y \delta (x)
    \end{align*}
    for all $x,y,z \in \mathcal{S}$.
\end{proof}

\begin{lemma}\label{lemma3.3.5}
    Let $\mathcal{M}$ be a $2$-torsion free bimodule over $\mathcal{S}$, $\delta$ a derivation on $\mathcal{S}$ and $f : \mathcal{S} \rightarrow \mathcal{M}$ a bimodule homomorphism over $\mathcal{S}$. If $D:\mathcal{S} \rightarrow \mathcal{M}$ is a Jordan $(\delta,f)$-derivation on $\mathcal{M}$, then 
    \begin{align*}
        (D(xy)-D(x)y-f(x)\delta (y))(xy-yx)=0
    \end{align*}
    for all $x,y \in \mathcal{S}$.
\end{lemma}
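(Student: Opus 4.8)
The strategy is the module analogue of Herstein's argument: introduce the \emph{defect} $\Phi(x,y):=D(xy)-D(x)y-f(x)\delta(y)$ and evaluate the symmetric element $D(xyxy+yxyx)$ in two different ways, arranged so that the Jordan-square identity forces $D(xy)$ to appear explicitly. Two preliminary facts are needed. The polarization identity obtained in the proof of the previous lemma, $D(st+ts)=D(s)t+D(t)s+f(s)\delta(t)+f(t)\delta(s)$, gives $\Phi(x,y)=-\Phi(y,x)$ and hence $D(yx)=D(x)y+D(y)x+f(x)\delta(y)+f(y)\delta(x)-D(xy)$. Moreover, since $f$ is a homomorphism of $\mathcal{S}$-bimodules and $\mathcal{S}$ acts on itself by multiplication, parsing $xy$ as a scalar acting on either factor gives $f(xy)=f(x)y=xf(y)$, and therefore $f(x)yx=xf(y)x$.

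For the first evaluation I would apply the Jordan $(\delta,f)$-derivation identity $D(s^{2})=D(s)s+f(s)\delta(s)$ to $s=xy$ and to $s=yx$ and add, obtaining
\[
D(xyxy+yxyx)=D(xy)\,xy+D(yx)\,yx+f(xy)\delta(xy)+f(yx)\delta(yx).
\]
For the second evaluation I would use the observation $xyxy+yxyx=(xyx)y+y(xyx)$, together with the polarization identity applied with $s=xyx$, $t=y$ and Lemma \ref{Lemma3.3.3} (to expand $D(xyx)$); this rewrites $D(xyxy+yxyx)$ as a sum of terms involving only $D(x),D(y),f,\delta$.

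Finally I would equate the two expressions. After substituting the above formula for $D(yx)$ and expanding $f(xy)\delta(xy)$ and $f(yx)\delta(yx)$ by the Leibniz rule for $\delta$, all terms cancel in pairs except for the contribution of the defect, leaving exactly
\[
\Phi(x,y)(xy-yx)=\bigl(xf(y)x-f(x)yx\bigr)\delta(y).
\]
The bimodule identity $f(x)yx=xf(y)x$ annihilates the right-hand side, which is the assertion. No further cancellation of a factor $2$ is required here, since the $2$-torsion freeness was already consumed in Lemmas \ref{Lemma3.3.3}--\ref{lemma3.3.4}.

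The main obstacle is bookkeeping rather than conceptual: one must verify that the roughly eight terms produced by each expansion match up exactly, and recognize that the single surviving term is killed by the bimodule-homomorphism relation $f(x)y=xf(y)$ rather than by a primeness or torsion hypothesis. The one genuinely structural choice is to work with the symmetric element $xyxy+yxyx$, which is simultaneously a sum of Jordan squares (of $xy$ and of $yx$) and of the palindromic form $(xyx)y+y(xyx)$, so that both Lemma \ref{Lemma3.3.3} and the Jordan-square identity can be applied to the same element.
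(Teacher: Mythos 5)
Your proof is correct, and it is a genuinely different route from the paper's --- in fact it is Herstein's classical decomposition transported to the $(\delta,f)$-setting. You work with the element $(xy)^2+(yx)^2=(xyx)y+y(xyx)$, evaluated once by two applications of the Jordan-square identity (eliminating $D(yx)$ via the polarization identity, i.e. the antisymmetry $\Phi(x,y)=-\Phi(y,x)$ of the defect) and once by polarization with $s=xyx$, $t=y$ together with Lemma \ref{Lemma3.3.3}. The paper instead takes $\mathcal{W}=xy(xy)+(xy)yx=(xy)^2+xy^2x$, expanded once by the trilinear Lemma \ref{lemma3.3.4} (with $z=xy$) and once by the Jordan-square identity for $xy$ together with Lemma \ref{Lemma3.3.3} applied to $xy^2x$. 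I checked your expansion: using $f(xy)=f(x)y$, $f(yx)=f(y)x$, $f(xyx)=f(x)yx=xf(y)x$ and the Leibniz rule for $\delta$, the two evaluations match term by term, leaving exactly $D(xy)(xy-yx)=D(x)y(xy-yx)+f(x)\delta(y)(xy-yx)$, which is the assertion; and you are right that no factor of $2$ needs cancelling here, since $2$-torsion freeness is consumed in Lemma \ref{Lemma3.3.3} and the polarization step. The trade-offs: the paper's element keeps only $D(xy)$ in play, so the defect surfaces without any substitution, at the price of invoking Lemma \ref{lemma3.3.4}; your element dispenses with Lemma \ref{lemma3.3.4} entirely and makes the antisymmetry of the defect explicit (a fact the paper only records later, in Theorem \ref{teo3.3.1}(2)), at the price of one substitution for $D(yx)$ and somewhat heavier bookkeeping. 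One small correction of emphasis: if you expand $f(xyx)$ as $f(x)yx$ uniformly from the start, the residual term $(xf(y)x-f(x)yx)\delta(y)$ never materializes --- the bimodule relation is consumed inside the expansion rather than at the end --- though either arrangement of the computation is valid.
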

\begin{proof}
    Let $x,y \in \mathcal{S}$. Consider that $\mathcal{W}=D(xy(xy)+(xy)yx)$. It follows from  Lemma \ref{lemma3.3.4} that
    \begin{align}
        \mathcal{W} &= D(xy(xy)+(xy)yx)\nonumber \\
        &= D(x)y(xy) + D(xy)yx + f(x) \delta (y) xy + f(xy)\delta (y) x + f(x)y \delta (xy) + f(xy)y\delta (x)\nonumber \\
        &= D(x) y (xy) + D(xy) yx + f(x) \delta (y) xy + f(x)y\delta (y)x + f(x) y \delta (xy) + f(x)y^2 \delta (x). \label{eq3.3.7}
    \end{align}
    In another way, we have
    \begin{align}
        \mathcal{W} &= D(xy(xy)+(xy)yx) \nonumber \\
        &= D(xy)(xy) + f(xy)\delta (xy) + D(x)y^2x + f(x)\delta (y^2) x + f(x)y^2 \delta (x) \nonumber \\
        &= D(xy)(xy) + f(x) y \delta (xy) + D(x) y^2 x + f(x) \delta (y) yx + f(x) y \delta (y)x + f(x) y^2 \delta (x). \label{eq3.3.8}
    \end{align}
    As a consequence of Equations (\ref{eq3.3.7}) and (\ref{eq3.3.8}), it follows that
    \begin{align*}
(D(xy)-(x)y-f(x)\delta(y))(xy-yx)=0.
    \end{align*}
\end{proof}

The following lemma is the result of linearizing Lemma \ref{lemma3.3.5}.
\begin{lemma}\label{lemma3.3.6}
    Let $\mathcal{M}$ be a $2$-torsion free bimodule over $\mathcal{S}$, $\delta$ a derivation on $\mathcal{S}$ and $f : \mathcal{S} \rightarrow \mathcal{M}$ a bimodule homomorphism over $\mathcal{S}$. If $D:\mathcal{S} \rightarrow \mathcal{M}$ is a Jordan $(\delta,f)$-derivation on $\mathcal{M}$, then
    \begin{align*}
         (D(xy)-D(x)y-f(x)\delta (y))(zy-yz) + (D(zy)-D(z)y-f(z)\delta (y))(xy-yx) = 0
    \end{align*}
    and
    \begin{align*}
        (D(xy)-D(x)y-f(x)\delta(y))(zx-xz)+(D(zx)-D(z)x-f(z)\delta(x))(yx-xy) =0
    \end{align*}
    for all $x,y,z \in \mathcal{S}$.
\end{lemma}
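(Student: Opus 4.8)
The plan is to obtain both identities purely by \emph{linearizing} the single relation of Lemma~\ref{lemma3.3.5}, which holds for \emph{all} $x,y\in\mathcal{S}$. It is convenient to abbreviate $E(a,b)=D(ab)-D(a)b-f(a)\delta(b)$ for $a,b\in\mathcal{S}$, so that Lemma~\ref{lemma3.3.5} reads simply $E(a,b)(ab-ba)=0$. Two elementary facts make the linearization work. First, $E$ is additive in each of its two arguments separately: this is immediate because $D$ is additive, $\delta$ is additive (being a derivation), $f$ is additive (being a bimodule homomorphism), and the module multiplication distributes over addition. Second, the commutator $ab-ba$ is additive in each argument as well. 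Consequently, replacing a single variable by a sum and expanding produces precisely the cross terms we want, while the ``diagonal'' terms are instances of Lemma~\ref{lemma3.3.5} and hence vanish.

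For the first identity I would substitute $x\mapsto x+z$ (keeping $y$ fixed) into $E(x,y)(xy-yx)=0$. By additivity the first factor becomes $E(x,y)+E(z,y)$ and the second becomes $(xy-yx)+(zy-yz)$, so the relation reads
\begin{align*}
\bigl(E(x,y)+E(z,y)\bigr)\bigl((xy-yx)+(zy-yz)\bigr)=0.
\end{align*}
Multiplying out gives four terms; the two diagonal terms $E(x,y)(xy-yx)$ and $E(z,y)(zy-yz)$ are each zero by Lemma~\ref{lemma3.3.5}, and the two surviving cross terms are exactly
\begin{align*}
E(x,y)(zy-yz)+E(z,y)(xy-yx)=0,
\end{align*}
which is the first claimed identity.

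For the second identity I would run the companion substitution $y\mapsto y+z$ (now keeping $x$ fixed) in the same relation. The expand-and-cancel mechanism is formally identical, with the roles of the two arguments interchanged: the diagonal terms vanish by Lemma~\ref{lemma3.3.5}, and the surviving cross terms give $E(x,y)(xz-zx)+E(x,z)(xy-yx)=0$. Rewriting the commutators through $xz-zx=-(zx-xz)$ and $xy-yx=-(yx-xy)$ and clearing the overall sign then puts the relation into the form displayed in the statement. I would present the two substitutions side by side, since they are the same computation applied to the first and to the second slot of $E$.

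I do not anticipate a genuine obstacle: the result is a routine polarization and its whole content is bookkeeping. The two points that deserve care are (i) the justification that $E$ is additive in each slot, which is where the hypotheses that $f$ is a bimodule homomorphism and that $\delta$ is a derivation are actually used, and (ii) the sign tracking when the commutators $xz-zx$ and $xy-yx$ produced by the $y\mapsto y+z$ substitution are converted into the commutators $zx-xz$ and $yx-xy$ appearing in the statement. Once the diagonal products are correctly recognized as instances of Lemma~\ref{lemma3.3.5}, both identities fall out at once.
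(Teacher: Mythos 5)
Your linearization strategy is exactly the route the paper intends: the paper offers no written proof at all, remarking only that the lemma ``is the result of linearizing Lemma~\ref{lemma3.3.5},'' and your handling of the first identity carries this out completely and correctly. Writing $E(a,b)=D(ab)-D(a)b-f(a)\delta(b)$, the substitution $x\mapsto x+z$, additivity of $E$ in each slot, additivity of the commutator, and cancellation of the diagonal terms via Lemma~\ref{lemma3.3.5} do give $E(x,y)(zy-yz)+E(z,y)(xy-yx)=0$ verbatim.

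The second identity, however, is where your write-up has a genuine slip. Your substitution $y\mapsto y+z$ yields, as you say, $E(x,y)(xz-zx)+E(x,z)(xy-yx)=0$, and flipping both commutators gives $E(x,y)(zx-xz)+E(x,z)(yx-xy)=0$. But the second summand in the printed statement is $\bigl(D(zx)-D(z)x-f(z)\delta(x)\bigr)(yx-xy)=E(z,x)(yx-xy)$, and $E(x,z)\neq E(z,x)$ in a noncommutative algebra; in fact $E(x,z)=-E(z,x)$, because the linearized Jordan identity $D(x\bullet y)=D(x)y+D(y)x+f(x)\delta(y)+f(y)\delta(x)$ (established in the paper's unlabeled lemma preceding Lemma~\ref{Lemma3.3.3}, and restated as $x^y=-y^x$ in Theorem~\ref{teo3.3.1}) gives $E(x,z)+E(z,x)=0$. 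So ``clearing the overall sign'' does \emph{not} land on the printed form; what your derivation actually proves, after invoking this antisymmetry, is the minus-sign variant
\begin{align*}
\bigl(D(xy)-D(x)y-f(x)\delta(y)\bigr)(zx-xz)-\bigl(D(zx)-D(z)x-f(z)\delta(x)\bigr)(yx-xy)=0,
\end{align*}
equivalently the plus-sign identity with first factor $D(yx)-D(y)x-f(y)\delta(x)$, which is just the first identity with $x$ and $y$ swapped. This is not a defect of your argument but of the printed statement: the minus-sign form is precisely what the paper itself invokes in the proof of Lemma~\ref{lemma3.3.8}, whereas the printed plus-sign form cannot hold in general --- combined with the provable version it would force $2E(x,y)(zx-xz)=0$, hence by $2$-torsion freeness $E(x,y)(zx-xz)=0$ for all $x,y,z$, far stronger than anything available at this stage. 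To repair your proof, make the final rewriting honest: state the antisymmetry $E(x,z)=-E(z,x)$ explicitly (citing the linearized Jordan identity) and record the second conclusion with the corrected sign.
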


\begin{lemma}\label{lemma3.3.7}
    Let $\mathcal{M}$ be a $2$-torsion free bimodule over $\mathcal{S}$, $\delta$ a derivation on $\mathcal{S}$ and $f : \mathcal{S} \rightarrow \mathcal{M}$ a bimodule homomorphism over $\mathcal{S}$. If $D:\mathcal{S} \rightarrow \mathcal{M}$ is a Jordan $(\delta,f)$-derivation on $\mathcal{M}$ and $xy=xy$ for all $x,y \in \mathcal{S}$, then
    \begin{align*}
        D(xy)=D(x)y+f(x)\delta(y).
    \end{align*}
\end{lemma}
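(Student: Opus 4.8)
The hypothesis as printed, ``$xy=xy$ for all $x,y\in\mathcal{S}$,'' is vacuous and must be a typographical slip; the natural reading, and the one I adopt, is that $\mathcal{S}$ is commutative, i.e.\ $xy=yx$ for all $x,y\in\mathcal{S}$. Under this reading the commutator $xy-yx$ vanishes identically, so Lemma~\ref{lemma3.3.5} and its linearization Lemma~\ref{lemma3.3.6} both degenerate to $0=0$ and carry no information. The correct engine for this lemma is therefore not the commutator identities but the polarization and symmetry identities obtained while proving the earlier (unlabelled) lemma, combined with the $2$-torsion freeness of $\mathcal{M}$.

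The plan is as follows. First I would recall the polarization identity $D(x\bullet y)=D(x)y+D(y)x+f(x)\delta(y)+f(y)\delta(x)$, which comes from expanding $D\bigl((x+y)^2\bigr)$ by the Jordan $(\delta,f)$-derivation rule and cancelling the pure-square terms (the same computation used in the proof of the lemma preceding Lemma~\ref{Lemma3.3.3}). Since $x\bullet y=xy+yx$ and $\mathcal{S}$ is commutative, the left-hand side is $D(2xy)=2D(xy)$, so
\[
2D(xy)=D(x)y+D(y)x+f(x)\delta(y)+f(y)\delta(x).
\]

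Next I would invoke the symmetry identity $D(b)a+f(b)\delta(a)=bD(a)+\delta(b)f(a)$ proved earlier, taken with $b=y$, $a=x$, giving $D(y)x+f(y)\delta(x)=yD(x)+\delta(y)f(x)$. Substituting this into the displayed equation and regrouping should collapse the right-hand side to $2\bigl(D(x)y+f(x)\delta(y)\bigr)$; dividing by $2$ via $2$-torsion freeness then yields $D(xy)=D(x)y+f(x)\delta(y)$, as required.

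The main obstacle is exactly this final collapse: the symmetry identity delivers $yD(x)$ and $\delta(y)f(x)$ written with the \emph{left} $\mathcal{S}$-action on $\mathcal{M}$, while the target $D(x)y+f(x)\delta(y)$ uses the \emph{right} action. Matching them needs $yD(x)=D(x)y$ and $\delta(y)f(x)=f(x)\delta(y)$, i.e.\ that the two actions agree on $\mathcal{M}$, so the argument really requires $\mathcal{M}$ to be a \emph{symmetric} bimodule (automatic once $\mathcal{M}$ is viewed as a module over the commutative $\mathcal{S}$ with $sm=ms$), and I would make this explicit. If instead the intended hypothesis were a Cusack-type non-zero-divisor condition on the commutators rather than commutativity, the proof would route differently and would finally justify Lemmas~\ref{lemma3.3.5}--\ref{lemma3.3.6}: one would feed $xy-yx$ into Lemma~\ref{lemma3.3.5}, use injectivity of right multiplication by $xy-yx$ on $\mathcal{M}$ to force $D(xy)-D(x)y-f(x)\delta(y)=0$ on non-commuting pairs, and patch the commuting pairs through the linearizations in Lemma~\ref{lemma3.3.6}.
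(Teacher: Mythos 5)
Your reading of the hypothesis is the wrong one, and it matters. The typo ``$xy=xy$'' stands for $xy=yx$ \emph{for the given pair} $x,y$ (the paper's proof opens with ``It is known that $xy=yx$''), not for global commutativity of $\mathcal{S}$: in the proof of Theorem \ref{teo3.3.2} this lemma is invoked for $y\in Z(\mathcal{S})$ and \emph{arbitrary} $x$ in a prime, generally noncommutative, algebra, so under your reading the lemma would be inapplicable exactly where the paper needs it, and your side remark that Lemmas \ref{lemma3.3.5} and \ref{lemma3.3.6} degenerate is likewise false in the intended setting. More seriously, your argument does not close even under your own hypothesis, as you concede: polarization gives $2D(xy)=D(x)y+D(y)x+f(x)\delta(y)+f(y)\delta(x)$, and the symmetry identity converts $D(y)x+f(y)\delta(x)$ into the \emph{left}-action terms $yD(x)+\delta(y)f(x)$; nothing in the paper's setting lets you move these back to the right side, and promoting ``$\mathcal{M}$ is a symmetric bimodule'' to a hypothesis proves a strictly weaker statement than the one the paper states and uses. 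So this is a genuine gap, not an alternative route.

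The paper's proof avoids the left/right problem entirely by staying with the trilinear identity of Lemma \ref{lemma3.3.4}: it evaluates $\mathcal{W}=D\bigl(xy(xy)+(xy)yx\bigr)$ twice --- once, using $yx=xy$, as $2D((xy)^2)=2D(xy)(xy)+2f(xy)\delta(xy)$ via the Jordan rule together with $f(xy)=f(x)y$ and Leibniz for $\delta$, and once by Lemma \ref{lemma3.3.4} with $z=xy$. Subtracting leaves only right-action terms and yields
\begin{align*}
\bigl(D(xy)-D(x)y-f(x)\delta(y)\bigr)(xy)=0,
\end{align*}
from which the conclusion is drawn. If you want to repair your write-up, the fix is to replace the symmetry identity by Lemma \ref{lemma3.3.4}; your polarization step is fine but insufficient on its own. (You would then also notice the one soft spot in the paper's own argument: the final cancellation of the factor $(xy)$ is itself not justified without a faithfulness-type or primeness hypothesis. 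But that is a defect of the paper's last line, not a licence for importing a symmetric-bimodule assumption.)
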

\begin{proof}
    Let $x,y \in \mathcal{S}$. Let $\mathcal{W}=D(xy(xy)+(xy)yx)$. It is known that $xy=yx$, thus
    \begin{align}
        S &= D(xy(xy)+(xy)yx) \nonumber \\
        &= 2D((xy)^2)\nonumber \\
        &= 2D(xy)(xy) + 2f(xy)\delta (xy) \nonumber \\
        &= 2D(xy) (xy) + f(x) y \delta (xy) + f(x)y \delta (y)x + f(x)y^2 \delta (y). \label{eq3.3.9}
    \end{align}
    Alternatively, Lemma \ref{lemma3.3.4} yields 
    \begin{align}
        S &= D(xy(xy)+(xy)yx) \nonumber \\
        &= D(x)y(xy)+D(xy)yx+f(x)\delta (y) xy+f(xy)\delta(y)x + f(x)y \delta (xy) + f(xy)y\delta (x) \nonumber \\
        &= D(x)y(xy) + D(xy)xy + f(x)\delta (y)xy + f(x)y \delta (y)x + f(x)y\delta (xy) + f(x)y^2\delta (x). \label{eq3.3.10}
    \end{align}
    By combining Equations (\ref{eq3.3.9}) and (\ref{eq3.3.10}), we obtain
    \begin{align*}
        &D(xy)(xy) = D(x) y(xy) + f(x)\delta (y) (xy)\\
        & (D(xy)-D(x)y-f(x)\delta (y))(xy) = 0\\
        & D(xy) = D(x)y + f(x)\delta (y).
    \end{align*}
\end{proof}

Now, we recall the definition of jointly prime bisubmodules from \cite{khumprapussorn} but in special case, i.e $\mathcal{A}$ is an associative algebra over $R$ with identity.
\begin{definition}
    A proper bisubmodule $\mathcal{K}$ of $\mathcal{M}$ is called jointly prime if for each left ideal $I$ of $\mathcal{S}$, right ideal $J$ of $\mathcal{S}$ and bisubmodule $\mathcal{N}$ of $\mathcal{M}$,
    \begin{align*}
        I \mathcal{N} J \subseteq \mathcal{K} \text{ implies } I \mathcal{M} J \subseteq \mathcal{K} \text{ or } \mathcal{N} \subseteq \mathcal{K}.
    \end{align*}
    A bimodule $\mathcal{M}$ is said to be jointly prime if its zero bisubmodule is a jointly prime bisubmodule of $\mathcal{M}$.
\end{definition}

\begin{lemma}\label{lemma3.3.8}
    Let $\mathcal{M}$ be a $2$-torsion free and jointly prime bimodule over $\mathcal{S}$, $\delta$ a derivation on $\mathcal{S}$ and $f : \mathcal{S} \rightarrow \mathcal{M}$ a bimodule homomorphism over $\mathcal{S}$. If $D:\mathcal{S} \rightarrow \mathcal{M}$ is a Jordan $(\delta,f)$-derivation on $\mathcal{M}$, and $xy=0$ for all $x,y \in \mathcal{S}$, then
    \begin{align*}
        D(xy) = D(x)y + f(x)\delta (y)=0.
    \end{align*}
\end{lemma}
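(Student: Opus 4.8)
The plan is to read the hypothesis as applying to a fixed pair $x,y\in\mathcal{S}$ with $xy=0$, so that $D(xy)=D(0)=0$ is immediate from additivity of $D$. The substantive claim is therefore that $w:=D(x)y+f(x)\delta(y)=0$; equivalently, the defect $D(xy)-D(x)y-f(x)\delta(y)$, which equals $-w$ on this pair, must vanish. The whole argument then rests on producing an annihilation relation for $w$ that the jointly prime hypothesis on $\mathcal{M}$ can convert into $w=0$.

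First I would specialize Lemma \ref{lemma3.3.5} to the pair $(x,y)$. Since $xy=0$, the commutator collapses to $xy-yx=-yx$ and the defect to $-w$, so Lemma \ref{lemma3.3.5} yields $w\,(yx)=0$. Because the right action is by ring elements and is associative, this upgrades at once to $w\,(yx)\mathcal{S}=0$, i.e.\ $w$ right-annihilates the right ideal generated by $yx$. This is the seed relation from which everything else is grown.

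Next I would exploit the fact that $xy=0$ propagates: $x(ys)=(xy)s=0$ and $(sx)y=s(xy)=0$ for every $s\in\mathcal{S}$, so Lemma \ref{lemma3.3.5} and its polarizations in Lemma \ref{lemma3.3.6} may be applied to the perturbed pairs $(x,ys)$ and $(sx,y)$ as well. Combining these specializations, the aim is to spread the one-sided relation $w(yx)=0$ into a genuine sandwich $I\,\mathcal{N}\,J=0$, where $\mathcal{N}$ is the bisubmodule of $\mathcal{M}$ generated by $w$, $I$ is a left ideal and $J$ a right ideal of $\mathcal{S}$ manufactured from $yx$. Once such a relation is in hand, the jointly prime hypothesis forces either $I\mathcal{M}J=0$ or $\mathcal{N}=0$; choosing $I,J$ large enough that $I\mathcal{M}J\neq0$ (using that $\mathcal{S}$ has an identity and $\mathcal{M}$ is unital) eliminates the first alternative and gives $\mathcal{N}=0$, whence $w=0$. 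The $2$-torsion freeness is used, as in Lemmas \ref{lemma3.3.5} and \ref{lemma3.3.6}, to remove the factors of $2$ arising from polarization.

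I expect the main obstacle to be precisely this bridging step: Lemma \ref{lemma3.3.5} only controls $w$ multiplied by the single element $yx$ on one side, whereas the jointly prime property requires a two-sided ideal sandwich around the bisubmodule $\mathcal{N}$. Producing the missing left factor, and controlling the cross terms $D(sx)y+f(sx)\delta(y)$ and $D(x)(ys)+f(x)\delta(ys)$ that appear when $x,y$ are replaced by $sx,ys$ in the linearized identities of Lemma \ref{lemma3.3.6}, is the delicate part; equally, one must verify the non-degeneracy $I\mathcal{M}J\neq0$ so that primeness yields $\mathcal{N}=0$ rather than the trivial alternative $I\mathcal{M}J=0$.
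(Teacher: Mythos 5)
Your setup is correct ($D(xy)=D(0)=0$ by additivity, so the task is to kill $w:=D(x)y+f(x)\delta(y)$), and your seed relation $w(yx)=0$, obtained by specializing Lemma \ref{lemma3.3.5} to the pair $(x,y)$, is a valid deduction. But the proposal has a genuine gap at exactly the point you yourself flag as the ``main obstacle'': converting that one-sided relation into a sandwich $I\mathcal{N}J=0$ usable by joint primeness is the entire content of the proof, and you leave it as an aim rather than carrying it out. Note in particular that Lemma \ref{lemma3.3.5} alone can never supply this, because it contains no free variable ranging over $\mathcal{S}$; the paper instead starts from the linearized Lemma \ref{lemma3.3.6}, whose extra variable $z$ manufactures the needed ideal. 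Concretely, the paper multiplies the identity for the triple $(x,y,z)$ on both sides by $y$; since $xy=0$ the terms $zxy$, $xyy$ and $yxy=y(xy)$ all vanish, leaving $y\,\bigl(D(xy)-D(x)y-f(x)\delta(y)\bigr)(xzy)=0$ for all $z\in\mathcal{S}$, from which joint primeness yields $wx=0$. A second, equally essential trick then substitutes $y\mapsto yry$ into $wx=0$ and expands $\delta(yry)$ by the Leibniz rule, using $f(x)y=f(xy)=0$ to discard cross terms, which upgrades $wx=0$ to $w\,r\,(yx)=0$ for all $r\in\mathcal{S}$; only now does joint primeness together with $yx\neq0$ give $w=0$. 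Your suggested perturbed pairs $(x,ys)$ and $(sx,y)$ produce relations of the shape $ws\,(ysx)=0$ with the same $s$ appearing on both sides, and even after polarizing in $s$ it is not clear these assemble into the required left-ideal/right-ideal sandwich; without executing this step the argument is incomplete.

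There is a second, independent gap: your strategy has no starting point when $yx=0$. In that case $xy-yx=0$, your seed relation $w(yx)=0$ is vacuous, and nothing in the plan produces a nonzero element to feed into the jointly prime hypothesis (indeed the nondegeneracy condition $I\mathcal{M}J\neq0$ that you rightly insist on is exactly what fails to be available). The paper dispatches this case separately and cheaply: if $yx=0$ then $xy=yx$, so Lemma \ref{lemma3.3.7} (the commuting case) applies directly and gives $D(xy)=D(x)y+f(x)\delta(y)$, i.e.\ $w=0$. Your proof needs this case split; the annihilation-spreading machinery only runs under the hypothesis $yx\neq0$, which is also what the paper uses as the nonzero witness in both of its appeals to joint primeness.
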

\begin{proof}
     Let any $x,y \in \mathcal{S}$ and it is known that $xy = 0$. If $yx = 0$ then $xy=yx = 0$ and based on Lemma \ref{lemma3.3.7} implies that 
     \begin{align*}
         D(xy) & =D(x)y + f(x)\delta (y)\\
        0 &= D(x)y + f(x)\delta (y).
     \end{align*}
     If $yx \ne 0$ and according to Lemma \ref{lemma3.3.6}, we known that
     \begin{align*}
         &(D(xy)-D(x)y-f(x)\delta(y))(zx-xz) - (D(zx)-D(z)x-f(z)\delta (x)) (yx-xy)=0\\
         &y(D(xy)-D(x)y-f(x)\delta (y))(zxy-xzy) - y(D(zx)-D(z)x-f(z)\delta (x))(yxy-xyy)=0\\
         &y(D(xy)-D(x)y-f(x)\delta(y))(xzy)=0 \quad \quad \text{for all } z \in \mathcal{S}.
     \end{align*}
     Since $\mathcal{M}$ is a jointly prime bimodule, $yx \ne 0$ and we assumed that $y \ne 0$, it implies that
     \begin{align*}
         (D(xy)-D(x)y-f(x)\delta (y))x &= 0\\
          -(D(x)y+f(x)\delta (y))x &= 0.
     \end{align*}
     Let $y = yry$. If $xy = 0$, then for all $r \in \mathcal{S}, x(yry)=0$. Substituting $y = yry$ gives
     \begin{align*}
          &-(D(x)y+f(x)\delta (y))x=0\\
          &(D(x)yry+f(x)\delta (yry))x = 0\\
          & (D(x)yry + f(x)\delta (y)ry+f(x)y\delta (r)y + f(x)yr\delta (y))x = 0\\
          &(D(x)yry+f(x)\delta(y)ry+f(xy)\delta (r)y+f(xy)r \delta(y))x = 0\\
          &(D(x)y+f(x)\delta(y))ryx = 0.
     \end{align*}
     Since $\mathcal{M}$ is a jointly prime bimodule and $yx \ne 0$, it follows that $D(x)y + f(x)\delta (y) = 0$. Therefore, it is proven that for all $x,y \in \mathcal{S}$ and $xy=0$, we have
     \begin{align*}
         D(xy)=D(x)y+ f(x)\delta (y)=0.
     \end{align*}
\end{proof}

\begin{corollary}\label{cor3.3.1}
   Let $\mathcal{M}$ be a $2$-torsion free and jointly prime bimodule over $\mathcal{S}$, $\delta$ a derivation on $\mathcal{S}$ and $f : \mathcal{S} \rightarrow \mathcal{M}$ a bimodule homomorphism over $\mathcal{S}$. If $D:\mathcal{S} \rightarrow \mathcal{M}$ is a Jordan $(\delta,f)$-derivation on $\mathcal{M}$, and for all $x,y \in \mathcal{S}, xy=0$, then
    \begin{align*}
        D(yx)= D(y)x+\delta(y)f(x).
    \end{align*}
\end{corollary}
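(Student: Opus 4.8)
The plan is to reduce the computation of $D(yx)$ to the Jordan product and then strip away the vanishing part using Lemma \ref{lemma3.3.8}. Fix $x,y\in\mathcal{S}$ with $xy=0$. Since $x\bullet y=xy+yx=yx$, it is enough to evaluate $D(x\bullet y)$, and the Jordan framework is well suited to this because $D$ respects $\bullet$ in the linearized sense established while proving the symmetry lemma.

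First I would invoke the linearized identity
\begin{align*}
D(x\bullet y)=D(x)y+D(y)x+f(x)\delta(y)+f(y)\delta(x),
\end{align*}
obtained in the proof of the (unnamed) symmetry lemma preceding Lemma \ref{Lemma3.3.3}. As $x\bullet y=yx$, this reads $D(yx)=D(x)y+D(y)x+f(x)\delta(y)+f(y)\delta(x)$. Lemma \ref{lemma3.3.8} applies to the pair $(x,y)$ and gives $D(x)y+f(x)\delta(y)=0$; substituting this cancels two of the four summands and leaves
\begin{align*}
D(yx)=D(y)x+f(y)\delta(x).
\end{align*}

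The main obstacle is the final matching of the second summand, since the target carries $\delta(y)f(x)$ (a left action of $\mathcal{S}$ on $\mathcal{M}$) whereas the computation has produced $f(y)\delta(x)$ (a right action). Thus the crux is to establish $f(y)\delta(x)=\delta(y)f(x)$, which by the symmetry lemma $D(b)a+f(b)\delta(a)=bD(a)+\delta(b)f(a)$ is equivalent to the identity $D(y)x=yD(x)$. To reach it I would combine both forms of the vanishing relation coming from $xy=0$, namely $D(x)y+f(x)\delta(y)=0$ (Lemma \ref{lemma3.3.8}) and its companion $xD(y)+\delta(x)f(y)=0$, the latter obtained by feeding the former into the symmetry lemma with $a=y$, $b=x$. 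The delicate issue is precisely that the symmetry lemma interchanges both factors at once, sending $D(y)x+f(y)\delta(x)$ to $yD(x)+\delta(y)f(x)$, so on its own it cannot convert only the second summand; disentangling the left-action side from the right-action side is where the jointly prime hypothesis on $\mathcal{M}$ must be used, presumably through the regularization device $y=yry$ together with the zero products $x(yx)=(xy)x=0$ and $(yx)y=y(xy)=0$, exactly as in the proof of Lemma \ref{lemma3.3.8}. Once $f(y)\delta(x)=\delta(y)f(x)$ is secured, substitution gives $D(yx)=D(y)x+\delta(y)f(x)$, as required.
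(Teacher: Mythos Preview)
Your derivation up to $D(yx)=D(y)x+f(y)\delta(x)$ is correct and is essentially the paper's own argument: the paper expands $D(yx)=D(y\bullet x)=D(y)\bullet' x+f(y)\bullet'\delta(x)$, rewrites $xD(y)+\delta(x)f(y)$ as $D(x)y+f(x)\delta(y)$ via the symmetry lemma, and then kills that pair with Lemma~\ref{lemma3.3.8}. You use the other linearized form directly, but the two routes are equivalent and one line apart.

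The ``main obstacle'' you identify is not a mathematical gap but a typographical inconsistency in the paper: the statement of the corollary displays $\delta(y)f(x)$, yet the paper's own proof concludes with $D(yx)=D(y)x+f(y)\delta(x)$ and makes no attempt to pass to $\delta(y)f(x)$. The form $f(y)\delta(x)$ is also the one consistent with the $(\delta,f)$-derivation rule $D(ab)=D(a)b+f(a)\delta(b)$ and with how the result feeds into the subsequent lemmas. So your computation already matches the paper's proof, and the extra programme you sketch (showing $f(y)\delta(x)=\delta(y)f(x)$ via jointly prime arguments) is unnecessary; you can stop at $D(yx)=D(y)x+f(y)\delta(x)$.
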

\begin{proof}
    Let any $x,y \in \mathcal{S}$. Given that $xy=0$, we will prove that $D(yx)=yD(x)+f(y)\delta(x)$. Based on Lemma \ref{lemma3.3.8}, we obtain $D(x)y+f(x)\delta(y)=0$. It is noted that
    \begin{align*}
        D(yx) &= D(yx+xy) \nonumber\\
        &=D(y \bullet x)\nonumber\\
        &=D(y) \bullet' x + f(y) \bullet' \delta (x)\nonumber\\
        &= D(y) x+xD(y) + f(y) \delta (x) + \delta(x)f(y)\nonumber\\
        &= D(y)x+f(y)\delta(x)+D(x)y+f(x)\delta(y)\\
        &= D(y)x+f(y)\delta(x).
    \end{align*}
\end{proof}

\begin{lemma}\label{lemma3.3.9}
     Let $\mathcal{M}$ be a $2$-torsion free and jointly prime bimodule over $\mathcal{S}$, $\delta$ a derivation on $\mathcal{S}$ and $f : \mathcal{S} \rightarrow \mathcal{M}$ a bimodule homomorphism over $\mathcal{S}$. If $D:\mathcal{S} \rightarrow \mathcal{M}$ is a Jordan $(\delta,f)$-derivation on $\mathcal{M}$, and for all $x,y \in \mathcal{S}, xy=0$, then for all $z \in \mathcal{S}$,
    \begin{align*}
        D((yx)z)=D(yx)z+f(yx)\delta(z).
    \end{align*}
\end{lemma}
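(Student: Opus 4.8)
The plan is to read off $D((yx)z)$ directly from the trilinearized identity in Lemma~\ref{lemma3.3.4} by choosing a substitution that makes the hypothesis $xy=0$ collapse the symmetric companion term, and then to check that what survives is literally $D(yx)z+f(yx)\delta(z)$ after rewriting $D(yx)$ and $f(yx)$. Here $x,y$ are fixed with $xy=0$ and $z$ is arbitrary; no new use of $2$-torsion freeness is needed since it is already built into the lemmas being cited.

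First I would apply Lemma~\ref{lemma3.3.4} with the arguments ordered as $(y,x,z)$, giving
\begin{align*}
D(yxz+zxy)=D(y)xz+D(z)xy+f(y)\delta(x)z+f(z)\delta(x)y+f(y)x\delta(z)+f(z)x\delta(y).
\end{align*}
Because $xy=0$, the companion term on the left is $zxy=z(xy)=0$, so the left-hand side is exactly $D((yx)z)$. On the right, $D(z)xy=D(z)(xy)=0$ vanishes, and the two terms carrying $f(z)$ assemble via the Leibniz rule into $f(z)\bigl(\delta(x)y+x\delta(y)\bigr)=f(z)\delta(xy)=f(z)\delta(0)=0$. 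Hence $D((yx)z)=D(y)xz+f(y)\delta(x)z+f(y)x\delta(z)$.

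Finally I would match this against the claimed right-hand side. By (the proof of) Corollary~\ref{cor3.3.1} one has $D(yx)=D(y)x+f(y)\delta(x)$, so $D(yx)z=D(y)xz+f(y)\delta(x)z$; and since $f$ is a bimodule homomorphism, $f(yx)=f(y)x$, whence $f(yx)\delta(z)=f(y)x\delta(z)$. Adding these reproduces precisely the surviving expression, yielding $D((yx)z)=D(yx)z+f(yx)\delta(z)$. There is no real obstacle in the computation itself: the whole argument is the trilinear identity plus the Leibniz cancellation, with the jointly prime hypothesis entering only implicitly through Corollary~\ref{cor3.3.1} (hence Lemma~\ref{lemma3.3.8}). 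The one step that must be gotten right is the choice of ordering $(y,x,z)$ in Lemma~\ref{lemma3.3.4}, since it is exactly this ordering that turns the companion term into $z(xy)=0$ and forces the two $f(z)$-terms to collapse to $f(z)\delta(xy)$; any other ordering leaves terms that do not obviously vanish.
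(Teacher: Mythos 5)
Your proposal is correct and follows essentially the same route as the paper: the paper also applies the trilinear identity of Lemma~\ref{lemma3.3.4} to the combination $D(zxy+yxz)$ (the same substitution as your $(y,x,z)$ ordering, up to reordering the sum), kills $D(z)xy$ and collapses the two $f(z)$-terms to $f(z)\delta(xy)=0$ via Leibniz, and then regroups the survivors as $D(yx)z+f(yx)\delta(z)$ using $D(yx)=D(y)x+f(y)\delta(x)$ from Corollary~\ref{cor3.3.1} and $f(yx)=f(y)x$. Your parenthetical ``(the proof of)'' Corollary~\ref{cor3.3.1} is well taken, since the corollary's displayed statement contains a typo and it is the identity established in its proof that is actually used.
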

\begin{proof}
    Let $x,y,z \in \mathcal{S}$. Let $\mathcal{H}=D(zxy+yxz)$. Assume that $xy=0$, we obtain
    \begin{align*}
        \mathcal{H} &= D(zxy + yxz)\\
        &= D(z)xy + D(y) xz + f(z)\delta (x)y + f(y)\delta(x)z + f(z)x\delta(y) + f(y) x \delta (z)\\
        &= (D(y)x + f(y)\delta(x))z + f(z)(\delta(x)y+x\delta(y))+f(yx)\delta(z)\\
        &= D(yx)z+f(z)\delta(xy) + f(yx)\delta(z)\\
        &= D(yx)z + f(yx)\delta(z).
    \end{align*}
    Therefore, it is proven that if $x,y \in \mathcal{S}$ and $xy=0$ then for all $z \in \mathcal{S}$
    \begin{align*}
        D((yx)z)=D(yx)z + f(yx)\delta(z).
    \end{align*}
\end{proof}

The following corollary is an immediate consequence of Lemma \ref{lemma3.3.9}.

\begin{corollary}\label{cor3.3.2}
     Let $\mathcal{M}$ be a $2$-torsion free and jointly prime bimodule over $\mathcal{S}$, $\delta$ a derivation on $\mathcal{S}$, $f : \mathcal{S} \rightarrow \mathcal{M}$ a bimodule homomorphism over $\mathcal{S}$. Let $D:\mathcal{S} \rightarrow \mathcal{M}$ be a Jordan $(\delta,f)$-derivation on $\mathcal{M}$ and $\mathcal{P}=\{ x \in \mathcal{S} : D(xa) = D(x)a+f(x)\delta(a), \text{ for all } a \in \mathcal{S}\}$. If $xy=0$, then based on Lemma \ref{lemma3.3.9}, we obtain $D((yx)z)=D((yx)z)+D(yx)z+f(yx)\delta(z)$ for all $z \in \mathcal{S}$ such that $yx \in \mathcal{P}$.
\end{corollary}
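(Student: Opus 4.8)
The plan is to read off the corollary directly from Lemma~\ref{lemma3.3.9}, since it is essentially a restatement of that lemma in the language of the set $\mathcal{P}$. First I would recall that, by definition, an element $w \in \mathcal{S}$ belongs to $\mathcal{P}$ exactly when $D(wa) = D(w)a + f(w)\delta(a)$ holds for \emph{every} $a \in \mathcal{S}$. Hence, to prove the assertion $yx \in \mathcal{P}$, it is enough to establish this single identity with $w = yx$, for an arbitrary $a = z \in \mathcal{S}$.

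Next I would invoke Lemma~\ref{lemma3.3.9}: the hypotheses of the corollary ($\mathcal{M}$ a $2$-torsion free jointly prime bimodule over $\mathcal{S}$, $\delta$ a derivation, $f$ a bimodule homomorphism, $D$ a Jordan $(\delta,f)$-derivation, and $xy = 0$) match those of the lemma verbatim. The lemma therefore yields $D((yx)z) = D(yx)z + f(yx)\delta(z)$ for all $z \in \mathcal{S}$. Comparing this with the defining condition of $\mathcal{P}$, with $w = yx$ and the bound variable $a$ specialised to $z$, we see that $yx$ satisfies the required identity for every $z$, whence $yx \in \mathcal{P}$, which is precisely what is claimed.

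There is essentially no substantive obstacle here: all of the genuine computation has already been carried out upstream, in Lemma~\ref{lemma3.3.9} (which in turn rests on Lemmas~\ref{Lemma3.3.3} and~\ref{lemma3.3.8}). The only step demanding any attention is the bookkeeping of matching the free variable $z$ of the lemma against the universally quantified $a$ in the definition of $\mathcal{P}$, together with noting that the displayed equation in the statement should read $D((yx)z) = D(yx)z + f(yx)\delta(z)$, the extra leading $D((yx)z)$ term being a typographical slip. Once that identification is made, the conclusion $yx \in \mathcal{P}$ follows immediately.
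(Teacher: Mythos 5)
Your proposal is correct and matches the paper exactly: the paper offers no separate proof, stating only that the corollary is an immediate consequence of Lemma~\ref{lemma3.3.9}, which is precisely your argument of specialising the defining condition of $\mathcal{P}$ to $w = yx$ and $a = z$. Your reading of the displayed equation as a typographical slip for $D((yx)z) = D(yx)z + f(yx)\delta(z)$ is also the right interpretation.
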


\begin{theorem}\label{teo3.3.1}
     Let $\mathcal{M}$ be a $2$-torsion free and jointly prime bimodule over $\mathcal{S}$, $\delta$ a derivation on $\mathcal{S}$ and $f : \mathcal{S} \rightarrow \mathcal{M}$ a bimodule homomorphism over $\mathcal{S}$. Let $D:\mathcal{S} \rightarrow \mathcal{M}$ be a Jordan $(\delta,f)$-derivation on $\mathcal{M}$. If $x^y=D(xy)-D(x)y-f(x)\delta(y)$ for all $x,y \in \mathcal{S}$, then the following hold:
    \begin{enumerate}
        \item $x^{y+z} = x^y + x^z$,
        \item $x^y = -y^x$,
    \end{enumerate}
    for all $x,y,z \in \mathcal{S}$.
\end{theorem}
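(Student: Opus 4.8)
The plan is to prove both identities by direct computation straight from the definition $x^y = D(xy)-D(x)y-f(x)\delta(y)$, using only additivity of the maps involved together with the linearized Jordan identity already established earlier in the section.

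For part~(1), I would expand $x^{y+z}=D(x(y+z))-D(x)(y+z)-f(x)\delta(y+z)$. Since $x(y+z)=xy+xz$ and $D$ is additive, $D(x(y+z))=D(xy)+D(xz)$; distributivity in the bimodule gives $D(x)(y+z)=D(x)y+D(x)z$; and because $\delta$ is additive (being a derivation) and $f(x)$ acts additively, $f(x)\delta(y+z)=f(x)\delta(y)+f(x)\delta(z)$. Regrouping the resulting six terms as $\bigl(D(xy)-D(x)y-f(x)\delta(y)\bigr)+\bigl(D(xz)-D(x)z-f(x)\delta(z)\bigr)$ yields exactly $x^y+x^z$. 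This step is purely formal and I anticipate no difficulty.

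For part~(2), the target is $x^y+y^x=0$. Writing both terms out gives
\[
x^y+y^x = D(xy)+D(yx)-\bigl(D(x)y+D(y)x\bigr)-\bigl(f(x)\delta(y)+f(y)\delta(x)\bigr),
\]
and additivity of $D$ collapses $D(xy)+D(yx)=D(xy+yx)=D(x\bullet y)$. At this point I would invoke the identity $D(x\bullet y)=D(x)y+D(y)x+f(x)\delta(y)+f(y)\delta(x)$, which is precisely what is derived in the proof of the (unnumbered) lemma immediately following Lemma~\ref{lemma3.3.1}. Substituting it cancels the four subtracted terms one by one and leaves $x^y+y^x=0$, that is, $x^y=-y^x$.

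The only genuine dependency, and the mild obstacle, is that part~(2) is not self-contained: it rests entirely on the linearized Jordan relation for $D(x\bullet y)$. Since that relation is already available under exactly the same hypotheses ($\mathcal{M}$ a $2$-torsion free bimodule and $D$ a Jordan $(\delta,f)$-derivation), there is nothing further to justify, and the remainder of the argument is routine bookkeeping with additive maps.
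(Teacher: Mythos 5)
Your proposal is correct and follows essentially the same route as the paper's own proof: part (1) is the identical direct computation from additivity of $D$, $\delta$, and the module action, and part (2) likewise collapses $D(xy)+D(yx)$ into $D(x\bullet y)$ and substitutes the linearized identity $D(x\bullet y)=D(x)y+D(y)x+f(x)\delta(y)+f(y)\delta(x)$ established in the proof of the unnumbered lemma following Lemma~\ref{lemma3.3.1}. Your explicit remark that this identity holds under the same hypotheses is sound, so nothing further is needed.
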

\begin{proof}
    \begin{enumerate}
        \item Given that $x^y = D(xy)-D(x)y-f(x)\delta(y)$ for all $x,y \in \mathcal{S}$. It is noted that for all $x,y,z \in \mathcal{S}$,
        \begin{align*}
            x^y &= D(xy)-D(x)y-f(x)\delta(y),\\
            x^z &= D(xz)-D(x)z-f(x)\delta(z).
        \end{align*}
        Thus, we obtain
        \begin{align*}
            x^y + x^z &= D(xy)-D(x)y-f(x)\delta(y)+D(xz)-D(x)z-f(x)\delta(z)\\
            &= D(xy)+D(xz)-D(x)(y+z)-f(x)(\delta(y)+\delta(z))\\
            &= D(xy+xz)-D(x)(y+z)-f(x)(\delta(y+z))\\
            &= D(x(y+z))-D(x)(y+z)-f(x)\delta(y+z)\\
            &= x^{y+z}.
        \end{align*}
        \item For all $x,y \in \mathcal{S}$, we have $x^y = D(xy)-D(x)y-f(x)\delta(y)$ and $y^x=D(yx)-D(y)x-f(y)\delta(x)$. Thus, we obtain
        \begin{align*}
            x^y + y^x &= D(xy)-D(x)y-f(x)\delta(y) + D(yx)-D(y)x-f(y)\delta(x)\\
            x^y + y^x &= D(xy+yx) - D(y)x-f(x)\delta(y) -D(y)x-f(y)\delta(x)\\
            x^y+y^x &= D(x)y + D(y)x + f(x)\delta(y)+f(y)\delta(x) -D(x)y-f(x)\delta(y)\\
            &\quad -D(y)x-f(y)\delta(x)\\
            x^y+y^x &= 0\\
            x^y &= -y^x.
        \end{align*}
    \end{enumerate}
\end{proof}

\begin{lemma}\label{lemma3.3.10}
    For $x \in \mathcal{S}$, the set $T(x)$ is defined as $T(x) = \{ m \in \mathcal{M} \mid m(xa-ax)=0 \text{ for all } a \in \mathcal{S} \}$. Then, $T(x)$ is a subbimodule of $\mathcal{M}$.
\end{lemma}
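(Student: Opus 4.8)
The plan is to verify directly from the definition $T(x) = \{\, m \in \mathcal{M} \mid m(xa-ax)=0 \text{ for all } a \in \mathcal{S}\,\}$ the three closure properties required of a subbimodule: closure under subtraction, under the left $\mathcal{S}$-action, and under the right $\mathcal{S}$-action. The first two are routine. For additivity, if $m_1,m_2 \in T(x)$ then for every $a\in\mathcal{S}$ we have $(m_1-m_2)(xa-ax) = m_1(xa-ax) - m_2(xa-ax) = 0$, and since $0\in T(x)$ the set $T(x)$ is an additive subgroup. For the left action I would invoke the compatibility axiom $(sm)t = s(mt)$: given $m\in T(x)$ and $s\in\mathcal{S}$, for every $a$ we get $(sm)(xa-ax) = s\bigl(m(xa-ax)\bigr) = s\cdot 0 = 0$, so $sm\in T(x)$.

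The main obstacle is closure under the right action, because $(ms)(xa-ax) = m\bigl(s(xa-ax)\bigr) = m(sxa - sax)$, and the element $sxa - sax$ is not itself a commutator $xc-cx$, so the defining relation does not apply verbatim. The idea is to recover $m(sxa-sax)$ as a difference of two instances of the membership relation for $m$. First, substituting $c = sa \in \mathcal{S}$ gives $m\bigl(x(sa)-(sa)x\bigr) = m(xsa - sax) = 0$. Second, substituting $c = s$ gives $m(xs-sx)=0$, and right-multiplying by $a$ and using the right-module associativity $(mt)a = m(ta)$ yields $m(xsa - sxa)=0$. Subtracting the two identities gives $m\bigl((xsa-sax)-(xsa-sxa)\bigr) = m(sxa - sax) = 0$, which is exactly $(ms)(xa-ax)=0$ for all $a$. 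Hence $ms\in T(x)$ for every $s\in\mathcal{S}$.

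Putting the three parts together shows that $T(x)$ is closed under addition and under both the left and right $\mathcal{S}$-actions, hence is a subbimodule of $\mathcal{M}$. I expect the only genuinely delicate point to be the right-action step: one must hit upon the two substitutions $c=sa$ and $c=s$ and exploit right-module associativity to manufacture the term $sxa - sax$, whereas the additive and left-action parts follow mechanically from the additivity of the module operations.
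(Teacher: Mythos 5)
Your proof is correct and takes essentially the same route as the paper: a direct verification in which the only nontrivial step, closure under the right action, rests on the commutator rearrangement $s(xa-ax) = \bigl(x(sa)-(sa)x\bigr) - (xs-sx)a$, which is precisely the identity $xar-arx = (xa-ax)r + a(xr-rx)$ the paper uses, with the roles of the two algebra elements swapped. The additive and left-action parts coincide with the paper's verbatim.
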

\begin{proof}
   We show that $T(x)$ is a subbimodule of $\mathcal{M}$. First, it will be shown that $T(x) \neq  \emptyset$. Note that $0 \in \mathcal{M}$, so for all $s \in \mathcal{S},0(xa-ax)=0xa-0ax=0$. Hence, we obtain $T(x)  \neq \emptyset.$  Let $v_1,v_2 \in T(x)$ and $a \in \mathcal{S}$. Based on the definition of $T(x)$, we have $v_1(xa-ax)=0$ and $v_2(xa-ax)=0$. Consider that
    \begin{align*}
       (v_1-v_2)(xa-ax) &=  v_1 (xa-ax)-v_2(xa-ax) \\
        & = 0+0 =0.
    \end{align*}
    Thus, we obtain $v_1-v_2 \in T(x)$. Next, let $v_1 \in T(x)$ and $a \in \mathcal{S}$, we have
    \begin{align*}
        (v_1)(xar-arx) &= 0 \quad \quad \text{ for all } r \in \mathcal{S}\\
        v_1 \{ (xa-ax)r+a(xr-rx)\} & = 0\\
        v_1(xa-ax)r +v_1a(xr-rx) &= 0\\
        v_1a(xr-rx) &= 0
    \end{align*}
    and
    \begin{align*}
        av_1(xr-rx) &= a(v_1(xr-rx))=0.
    \end{align*}
    Thus, we obtain $v_1a, av_1\in T(x)$. Therefore, We conclude that $T(x)$ is a subbimodule of $\mathcal{M}$.
\end{proof}

\begin{lemma}\label{lemma3.3.11}
     Let $\mathcal{M}$ be a $2$-torsion free and jointly prime bimodule over $\mathcal{S}$. If $x \in \mathcal{S}$ and $x \notin Z(\mathcal{S})$, then $T(x) =0$.
\end{lemma}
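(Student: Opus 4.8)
The plan is to argue by contradiction on the non-centrality of $x$ and then apply the joint primeness of $\mathcal{M}$ to the bisubmodule $T(x)$. First I would use $x \notin Z(\mathcal{S})$ to fix an element $a_0 \in \mathcal{S}$ with $[x,a_0] = xa_0 - a_0x \neq 0$. By Lemma \ref{lemma3.3.10} the set $T(x)$ is a bisubmodule of $\mathcal{M}$, so $sm \in T(x)$ whenever $m \in T(x)$ and $s \in \mathcal{S}$; and by the very definition of $T(x)$, every $m \in T(x)$ satisfies $m[x,a_0]=0$.

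Next I would set up the joint-prime machinery with the left ideal $I = \mathcal{S}$, the right ideal $J = [x,a_0]\mathcal{S}$ generated by $[x,a_0]$, and the bisubmodule $\mathcal{N} = T(x)$. The key observation is that $I\,\mathcal{N}\,J = 0$: a typical generator has the form $s_1\,m\,[x,a_0]\,s_2$ with $s_1,s_2 \in \mathcal{S}$ and $m \in T(x)$, and since $s_1 m \in T(x)$ we get $(s_1 m)[x,a_0]=0$, so the whole product vanishes. Applying the definition of jointly prime (with $\mathcal{K}=0$), the inclusion $I\mathcal{N}J \subseteq 0$ forces either $\mathcal{N} = T(x) = 0$, which is exactly the desired conclusion, or $I\mathcal{M}J = \mathcal{S}\,\mathcal{M}\,[x,a_0]\,\mathcal{S} = 0$; in the latter case, evaluating with $1 \in \mathcal{S}$ on either side yields $\mathcal{M}[x,a_0]=0$.

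The step I expect to be the main obstacle is ruling out this second alternative. I would argue that $\mathcal{M}[x,a_0]=0$ together with $[x,a_0]\neq 0$ is impossible, since no nonzero element of $\mathcal{S}$ may right-annihilate all of $\mathcal{M}$ when $\mathcal{M}$ is a faithful prime bimodule. This faithfulness is precisely the delicate input: the joint-prime condition as stated produces the dichotomy above but does not by itself preclude $\mathcal{M}[x,a_0]=0$, so I would invoke faithfulness of $\mathcal{M}$ explicitly here. Granting it, the alternative $\mathcal{M}[x,a_0]=0$ cannot occur, leaving $T(x)=0$, as claimed.

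If one prefers to avoid choosing a single generator of $J$, the same computation can be run uniformly: if $T(x)\neq 0$, pick $0 \neq m_0 \in T(x)$ and replace $\mathcal{N}$ by the bisubmodule $\mathcal{S}m_0\mathcal{S}$, which is nonzero because $1\cdot m_0\cdot 1 = m_0 \neq 0$. For every $a \in \mathcal{S}$ one checks as above that $\mathcal{S}\,(\mathcal{S}m_0\mathcal{S})\,[x,a]\,\mathcal{S} = 0$, so joint primeness (with $\mathcal{S}m_0\mathcal{S} \neq 0$) forces $\mathcal{M}[x,a]=0$ for all $a$; specializing to $a=a_0$ again contradicts faithfulness since $[x,a_0]\neq 0$. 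Either route reduces the entire proof to the one genuinely nontrivial point, namely the faithfulness of the prime bimodule $\mathcal{M}$.
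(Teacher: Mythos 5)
Your proposal follows essentially the same route as the paper's proof: fix $p=a_0$ with $xp-px\neq 0$, use Lemma \ref{lemma3.3.10} to know $T(x)$ is a bisubmodule, and feed a bisubmodule contained in $T(x)$ into the jointly prime condition against the commutator. Indeed, the paper picks $0\neq m\in T(x)$, observes $\mathcal{S}m\mathcal{S}(xp-px)=0$, and concludes $\mathcal{S}m\mathcal{S}=0$ by joint primeness; your second variant with $\mathcal{N}=\mathcal{S}m_0\mathcal{S}$ is exactly this argument, and your choice $I=\mathcal{S}$, $J=[x,a_0]\mathcal{S}$ is the correct formalization (the paper takes $J=\mathcal{S}$, which does not literally fit the definition of jointly prime, since then $I\mathcal{N}J=\mathcal{S}(\mathcal{S}m\mathcal{S})\mathcal{S}$ need not vanish). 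The one substantive point is the step you flag: joint primeness only yields the dichotomy $T(x)=0$ or $I\mathcal{M}J=0$, and with identity the latter reads $\mathcal{M}[x,a_0]=0$; ruling it out genuinely requires faithfulness of $\mathcal{M}$, which is not among the lemma's hypotheses and does not follow from joint primeness (a jointly prime bimodule with $\mathcal{M}s=0$ for some $s\neq 0$ satisfies the dichotomy vacuously through the first horn $I\mathcal{M}J=0$). The paper's proof silently elides exactly this alternative, passing from ``$\mathcal{M}$ jointly prime and $xp-px\neq 0$'' to ``$\mathcal{S}m\mathcal{S}=0$'', i.e.\ tacitly assuming $\mathcal{M}(xp-px)\neq 0$. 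So your argument is correct precisely to the extent the paper's is: both establish the lemma only under an additional faithfulness assumption, and making that dependence explicit is an improvement on the source rather than a defect of your write-up.
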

\begin{proof}
    Given that $x \in \mathcal{S}$ and $x \notin Z(\mathcal{S})$, it will be shown that $T(x)=0$. Since $x \notin Z(\mathcal{S})$, for some $p \in \mathcal{S}$, we have $xp-px \ne 0$. Suppose $T(x) \ne 0$, then there exists $m \in T(x)$ with $m \ne 0$ such that $m(xp-px)=0$ for all $p \in \mathcal{S}$. Since $\mathcal{M}$ is a bimodule over $\mathcal{S}$, for any $c,d \in \mathcal{S}$ it follows that $(cmd)(xp-px)=c(m(xp-px))d=0$. This means, the bisubmodule $\mathcal{S}m\mathcal{S} \subseteq \mathcal{M}$ satisfies $\mathcal{S}m\mathcal{S} (xp-px)=0$. Consider $I=\mathcal{S}$ be a left ideal  of $\mathcal{S}$, $J=\mathcal{S}$ a right ideal of $\mathcal{S}$ and $N=\mathcal{S}m\mathcal{S}$ a bisubmodule of $\mathcal{M}$. Because $\mathcal{M}$ is a jointly prime bimodule over $\mathcal{S}$ and $xp-px\ne 0$, it must be that $\mathcal{S}m\mathcal{S} = 0$, which implies $m =0$. This contradiction. Therefore, it is proven that if $x \in \mathcal{S}$ and $x \notin Z(\mathcal{S})$, then $T(x)=0$.
\end{proof}

\begin{lemma}\label{lemma3.3.12}
    Let $\mathcal{M}$ be a $2$-torsion free and jointly prime bimodule over $\mathcal{S}$, $\delta$ a derivation on $\mathcal{S}$ and $f : \mathcal{S} \rightarrow \mathcal{M}$ a bimodule homomorphism over $\mathcal{S}$. Let $D:\mathcal{S} \rightarrow \mathcal{M}$ be a Jordan $(\delta,f)$-derivation on $\mathcal{M}$ and  $\mathcal{P}=\{ x \in \mathcal{S} : D(xa) = D(x)a+f(x)\delta(a), \text{ for all } a \in \mathcal{S}\}$. If $u \in \mathcal{P}$ and $u \notin Z(\mathcal{S})$, and if $vu=uv$, then $v \in \mathcal{P}$.
\end{lemma}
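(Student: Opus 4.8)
The plan is to work throughout in the notation $x^y=D(xy)-D(x)y-f(x)\delta(y)$ of Theorem~\ref{teo3.3.1}, under which membership in $\mathcal{P}$ becomes the statement that $x^a=0$ for all $a\in\mathcal{S}$. Thus the hypothesis $u\in\mathcal{P}$ reads $u^a=0$ for all $a$, and the desired conclusion $v\in\mathcal{P}$ amounts to $v^a=0$ for all $a$. I would reach this by proving that $x^v\in T(u)$ for every $x\in\mathcal{S}$: since $u\notin Z(\mathcal{S})$, Lemma~\ref{lemma3.3.11} gives $T(u)=0$, so $x^v=0$ for all $x$, and then the antisymmetry $x^v=-v^x$ of Theorem~\ref{teo3.3.1}(2) turns this into $v^x=0$ for all $x$, i.e. $v\in\mathcal{P}$.

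The computation proceeds in three substitutions. First I specialize the first identity of Lemma~\ref{lemma3.3.6}, which in this notation reads $x^y(zy-yz)+z^y(xy-yx)=0$, to $z=u$; since $u^y=0$ for every $y$, the second summand vanishes and there remains
\[
x^y(uy-yu)=0\qquad\text{for all }x,y\in\mathcal{S}.
\]
Next I linearize this relation in $y$ by replacing $y$ with $y+b$, using the additivity $x^{y+b}=x^y+x^b$ from Theorem~\ref{teo3.3.1}(1) together with $u(y+b)-(y+b)u=(uy-yu)+(ub-bu)$; the two diagonal terms $x^y(uy-yu)$ and $x^b(ub-bu)$ vanish by the previous line, leaving
\[
x^y(ub-bu)+x^b(uy-yu)=0\qquad\text{for all }x,y,b\in\mathcal{S}.
\]
Finally I put $y=v$: the hypothesis $vu=uv$ gives $uv-vu=0$, so the summand $x^b(uv-vu)$ disappears and we are left with $x^v(ub-bu)=0$ for all $x,b\in\mathcal{S}$.

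This last equation says exactly that $x^v(ub-bu)=0$ for every $b$, that is, $x^v\in T(u)$ for every $x$, which completes the argument along the lines of the first paragraph. I expect the main obstacle to be that the identities furnished by Lemma~\ref{lemma3.3.6} couple the exponent of $x^{(\cdot)}$ to the element occurring in the commutator, so a direct substitution yields only the diagonal relation $x^v(uv-vu)=0$, which becomes vacuous precisely because $u$ and $v$ commute. The decisive step is therefore the extra linearization in $y$, which decouples the exponent from the commutator and lets the commutativity $vu=uv$ kill exactly one of the two resulting summands, trapping $x^v$ in the annihilator $T(u)$; non-centrality of $u$ then forces $x^v=0$.
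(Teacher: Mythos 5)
Your proposal is correct and follows essentially the same route as the paper's own proof: specialize the first identity of Lemma~\ref{lemma3.3.6} at $z=u$ (killing one term since $u^y=0$), linearize in $y$, set $y=v$ so that $uv=vu$ eliminates the remaining coupled term and leaves $x^v(ub-bu)=0$, then conclude $x^v\in T(u)=0$ via Lemma~\ref{lemma3.3.11} and finish with the antisymmetry $x^v=-v^x$ from Theorem~\ref{teo3.3.1}(2). If anything, your write-up is cleaner than the paper's, which at the final step writes $xv$ and $vx$ where $x^v$ and $v^x$ are meant.
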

\begin{proof}
    Given that $u \in \mathcal{P}, u \notin Z(\mathcal{S})$ and $vu=uv$, it will be shown that $v \in \mathcal{P}$. Let $x,y,z \in \mathcal{S}$, based on Lemma \ref{lemma3.3.6}, we know that
    \begin{align*}
        & (D(xy)-D(x)y-f(x)\delta(y))(yz-zy)+(D(zy)-D(z)y-f(z)\delta(y))(xy-yx) = 0\\
        & x^y (yz-zy)+z^y (xy-yx)=0.
    \end{align*}
    If $u=z \in \mathcal{P}$ then $u^y = D(uy)-D(u)y -f(u)\delta(y) =0$ for all $y \in \mathcal{S}$. Therefore
    \begin{align*}
        x^y (yu-uy)+u^y(xy-yx)&=0\\
        x^y (uy-yu) &= 0 \\
        -x^y (uy-yu)&=0 \quad \text{ for all } x,y \in \mathcal{S}.
    \end{align*}
    Next, let any $y',z'\in \mathcal{S}$. Let $y=y'+z$. Substituting $y=y'+z'$ gives
    \begin{align*}
        &-x^{(y'+z')}(u(y'+z')-(y'+z')u) = 0\\
        &(x^{y'} + x^{z'})(uy'+uz'-y'u-z'u)=0\\
        &x^{y'}(uy'-y'u)+x^{y'}(uz'-z'u)+x^{z'}(uy'-y'u)+x^{z'}(uz'-z'u)=0\\
        &x^{y'}(uz'-z'u)+x^{z'}(uy'-y'u)=0 \quad \quad \text{ for all } x,y',z' \in \mathcal{S}.
    \end{align*}
    Since $vu=uv$ and let $y'=v$, we obtain
    \begin{align*}
        x^v (uz'-z'u)+x^{z'}(uv-vu) &=0\\
        x^v (uz'-z'u) + x^{z'}(uv-vu) &= 0\\
        x^v (uz'-z'u) &= 0.
    \end{align*}
    Therefore, $xv \in T(u)$. Since $u \notin Z(\mathcal{S})$, by Lemma \ref{lemma3.3.11}, we obtain $T(u) = 0$ and $xv=0$ for all $x \in \mathcal{S}$. Since $xv=-vx$, it follows that $vx=0$ for all $x \in \mathcal{S}$. Therefore, we conclude that $v \in \mathcal{P}$.
\end{proof}

We present the definition of prime algebra.
\begin{definition}
    \cite{erickson}
    Let $R$ be a commutative ring with identity and $\mathcal{S}$ an associative algebra over $R$ with identity. An algebra $\mathcal{S}$ over $R$ is said to be prime if for any two $R$-ideals $U$ and $V$ of $\mathcal{S}, UV=0$ implies $U=0$ or $V=0$.
\end{definition}

\begin{lemma}\label{lemma3.3.13}
    Let $\mathcal{S}$ be an associative prime algebra over $\mathcal{R}$ with identity, $\mathcal{M}$ a $2$-torsion free and jointly prime bimodule over $\mathcal{S}$, $\delta$ a derivation on $\mathcal{S}$ and $f : \mathcal{S} \rightarrow \mathcal{M}$ a bimodule homomorphism over $\mathcal{S}$. Let $D:\mathcal{S} \rightarrow \mathcal{M}$ be a Jordan $(\delta,f)$-derivation on $\mathcal{M}$ and $\mathcal{P}=\{ x \in \mathcal{S} : D(xa) = D(x)a+f(x)\delta(a), \text{ for all } a \in \mathcal{S}\}$. If $s \in \mathcal{S}$ and $s^2 = 0$ then $s \in \mathcal{P}$.
\end{lemma}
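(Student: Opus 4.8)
The plan is to reduce everything to the transfer principle in Lemma \ref{lemma3.3.12}, which propagates membership in $\mathcal{P}$ along commuting pairs as soon as one partner lies in $\mathcal{P}$ and outside the center $Z(\mathcal{S})$. If $s=0$ there is nothing to prove, since $0\in\mathcal{P}$ trivially, so I would assume $s\neq 0$. Because $\mathcal{S}$ is prime and $s\neq 0$, we cannot have $s\mathcal{S}s=0$; here I use the element form of primeness, $a\mathcal{S}b=0\Rightarrow a=0$ or $b=0$, which follows from the given ideal-theoretic definition because $\mathcal{S}$ has an identity. Hence there exists $b_0\in\mathcal{S}$ with $u:=sb_0s\neq 0$, and this $u$ is the auxiliary element I will feed into Lemma \ref{lemma3.3.12}.

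Next I would verify the three hypotheses of Lemma \ref{lemma3.3.12} for the pair $(s,u)$. For $u\in\mathcal{P}$: since $(b_0s)s=b_0s^2=0$, Corollary \ref{cor3.3.2} applied with $x=b_0s$ and $y=s$ (so that $xy=0$) gives $yx=sb_0s=u\in\mathcal{P}$. For $u\notin Z(\mathcal{S})$: observe $u^2=sb_0s\cdot sb_0s=sb_0s^2b_0s=0$, so if $u$ were central then $uru=u^2r=0$ for every $r\in\mathcal{S}$, whence $u\mathcal{S}u=0$ and primeness would force $u=0$, contradicting $u\neq 0$. Finally $s$ and $u$ commute, and in fact both products vanish: $su=s^2b_0s=0=sb_0s^2=us$, so $su=us$.

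With these three facts established, Lemma \ref{lemma3.3.12} applied to $u\in\mathcal{P}$, $u\notin Z(\mathcal{S})$, and $v=s$ with $vu=uv$ yields $s\in\mathcal{P}$, which is the assertion. The only genuine content is the construction of $u$: the crux is recognizing that $s\mathcal{S}s$ must contain a nonzero element by primeness, that such an element automatically lands in $\mathcal{P}$ via the already-proved Corollary \ref{cor3.3.2}, and that it can never be central precisely because a nonzero central element of a prime algebra cannot square to zero. I expect the main obstacle to be bookkeeping around the primeness hypothesis — in particular, justifying the step from the ideal-theoretic definition of a prime algebra to the element-wise statement $a\mathcal{S}b=0\Rightarrow a=0$ or $b=0$ that is invoked twice above — rather than any serious computation, since all the required derivation identities are already packaged inside Corollary \ref{cor3.3.2} and Lemma \ref{lemma3.3.12}.
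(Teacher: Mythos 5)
Your proposal is correct and follows essentially the same route as the paper: produce a nonzero element $u=sb_0s$ of $s\mathcal{S}s$ using element-wise primeness, get $u\in\mathcal{P}$ from Corollary \ref{cor3.3.2} (the paper redoes this computation inline via Lemma \ref{lemma3.3.4}, but it is the same instance), and conclude via Lemma \ref{lemma3.3.12} with $v=s$ after checking $su=us=0$. If anything, you are more careful than the paper, which merely asserts $srs\notin Z(\mathcal{S})$ where you supply the argument that a nonzero central element with $u^2=0$ would force $u\mathcal{S}u=0$, contradicting primeness.
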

\begin{proof}
    Let any $s \in \mathcal{S}$. Given that $s^2 = 0$, it will be shown that $s \in \mathcal{P}$. Let any $r \in \mathcal{S}$, so that $s \cdot sr = 0$. Let $H = D(qssr+srsq)$. It is noted that 
    \begin{align*}
        H &= D(qssr + srsq)\\
        &= D(q)ssr + D(sr)sq + f(q)\delta(s)sr + f(sr)\delta(s)q + f(q) s\delta(sr) + f(sr) s\delta(q)\\
        &= (D(sr)s+f(sr)\delta(s))q + f(q)(\delta(s)sr+s\delta(sr))+f(sr)s\delta(q)\\
        &= D(srs)q + f(q)\delta(ssr)+f(sr)s\delta(q)\\
        &= D(srs)q + f(srs)\delta(q) \quad \quad \text{ for all } q \in \mathcal{S}.
    \end{align*}
    Therefore, based on Corollary \ref{cor3.3.2}, we obtain $srs \in \mathcal{S}$. If we assume $s \ne 0$, then $srs \ne 0$ for some $r \in \mathcal{S}$. Next, observe that $s(srs) =s^2rs=0$ and $(srs)s=srs^2=0$, so we have $s(srs)=(srs)s=0$. Since $srs \ne 0$ but $(srs)(srs)=srs^2rs=0$, and since $\mathcal{S}$ is a prime algebra and $srs \notin Z(\mathcal{S})$, it follows that by Lemma \ref{lemma3.3.12} with $u=srs \in \mathcal{P}$ and $v=s$, we obtain $s \in \mathcal{P}$.
\end{proof}

\begin{lemma}\label{lemma3.3.14}
    Let $\mathcal{S}$ be an associative prime algebra over $\mathcal{R}$ with identity, $\mathcal{M}$ a $2$-torsion free and jointly prime bimodule over $\mathcal{S}$, $\delta$ a derivation on $\mathcal{S}$ and $f : \mathcal{S} \rightarrow \mathcal{M}$ a bimodule homomorphism over $\mathcal{S}$. Let $D:\mathcal{S} \rightarrow \mathcal{M}$ be a Jordan $(\delta,f)$-derivation on $\mathcal{M}$ and $\mathcal{P}=\{ x \in \mathcal{S} : D(xa) = D(x)a+f(x)\delta(a), \text{ for all } a \in \mathcal{S}\}$. If $z,w \in \mathcal{P}$ then $y^x (zw-wz)=0$ for all $x,y \in \mathcal{S}$.
\end{lemma}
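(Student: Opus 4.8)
The plan is to reduce the claim to the biadditivity and antisymmetry of the bracket $x^y=D(xy)-D(x)y-f(x)\delta(y)$ recorded in Theorem~\ref{teo3.3.1}, combined with a polarization of the second identity in Lemma~\ref{lemma3.3.6}. First I would observe that, by the very definition of $\mathcal{P}$, an element $u$ lies in $\mathcal{P}$ precisely when $u^{a}=0$ for all $a\in\mathcal{S}$. Moreover, since Theorem~\ref{teo3.3.1} gives $x^{y+z}=x^{y}+x^{z}$ together with $x^{y}=-y^{x}$, the bracket is additive in \emph{both} slots, i.e.\ $(x,y)\mapsto x^{y}$ is biadditive. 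Hence the hypotheses $z,w\in\mathcal{P}$ will directly supply the vanishing relations $z^{x}=z^{w}=0$ and $w^{y}=0$, and biadditivity is exactly what makes the polarization below legitimate.

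Next I would start from the second identity of Lemma~\ref{lemma3.3.6}, which in this notation reads
\begin{align*}
x^{y}(zx-xz)+z^{x}(yx-xy)=0 \qquad\text{for all } x,y,z\in\mathcal{S},
\end{align*}
and polarize it in the variable $x$. Replacing $x$ by $x+w$, expanding with biadditivity together with $z(x+w)-(x+w)z=(zx-xz)+(zw-wz)$ and $y(x+w)-(x+w)y=(yx-xy)+(yw-wy)$, and then subtracting the two diagonal instances obtained at $x$ and at $w$, the diagonal terms cancel and leave the mixed identity
\begin{align*}
x^{y}(zw-wz)+w^{y}(zx-xz)+z^{x}(yw-wy)+z^{w}(yx-xy)=0,
\end{align*}
valid for all $x,y,z,w\in\mathcal{S}$.

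Finally I would specialize to $z,w\in\mathcal{P}$. The relations $z^{x}=z^{w}=0$ and $w^{y}=0$ annihilate the last three summands, so the identity collapses to $x^{y}(zw-wz)=0$ for all $x,y\in\mathcal{S}$. Renaming the free variables (or equivalently invoking $x^{y}=-y^{x}$) then yields $y^{x}(zw-wz)=0$, which is the assertion. The one step that demands genuine care is the polarization bookkeeping: one must apply biadditivity in both factors of each product and track exactly which cross terms survive after removing the instances at $x$ and at $w$; once the mixed identity is in hand, the conclusion is immediate from the description of $\mathcal{P}$.
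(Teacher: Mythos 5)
Your proposal is correct and takes essentially the same route as the paper's proof: both arguments polarize the identity of Lemma~\ref{lemma3.3.6} in one variable (replacing $x$ by $x+w$) and then use $z,w\in\mathcal{P}$ together with the additivity and antisymmetry of the bracket from Theorem~\ref{teo3.3.1} to annihilate the remaining terms. The only (cosmetic) difference is the order of operations: the paper first uses $z\in\mathcal{P}$ to reduce to $y^{x'}(zx'-x'z)=0$ and then polarizes, whereas you polarize the full identity first and discard the three terms $w^{y}(zx-xz)$, $z^{x}(yw-wy)$, $z^{w}(yx-xy)$ at the end.
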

\begin{proof}
    Let any $x',y,z,w \in \mathcal{S}$. Based on Lemma \ref{lemma3.3.6}, we obtain $y^{x'}(zx'-x'z)+z^{x'}(yx'-x'y)=0$. Since $z \in \mathcal{P}$, we know that $z^{x'}=D(zx')-D(z)x'-f(z)\delta(x')=0$ for all $x' \in \mathcal{S}$, thus we obtain $y^{x'}(zx'-x'z)=0$. Let $x \in \mathcal{S}$ and $x'=x+w$. we obtain
    \begin{align*}
        &y^{x+w}(z(x+w)-(x+w)z) =0\\
        & y^x (zx-xz)+y^x(zw-wz)+y^w(zx-xz)+y^w(zw-wz)=0\\
        &y^x (zw-wz) + y^w (zx-xz) = 0.
    \end{align*}
    Since $w \in \mathcal{P}$, we know that $w^y = 0$ such that $y^w = 0$. Therefore,
    \begin{align*}
        y^x (zw-wz) + y^w (zx-xz) &= 0\\
        y^x (zw-wz) &= 0.
    \end{align*}
\end{proof}

The following theorem shows that every Jordan $(\delta,f)$-derivation is a $(\delta,f)$-derivation.
\begin{theorem}\label{teo3.3.2}
    Let $R$ be a commutative ring with identity, $\mathcal{S}$ an associative prime algebra over $R$ with identity and $\mathcal{M}$ a $2$-torsion free and jointly prime bimodule over $\mathcal{S}$. Let $\delta$ be a derivation on $\mathcal{S}$ and $f: \mathcal{S} \rightarrow \mathcal{M}$ a bimodule homomorphism over $\mathcal{S}$. If $D:\mathcal{S} \rightarrow \mathcal{M}$ be a Jordan $(\delta,f)$-derivation on $\mathcal{M}$ then $D$ is a $(\delta,f)$-derivation on $\mathcal{M}$.
\end{theorem}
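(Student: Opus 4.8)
The statement is equivalent to $\mathcal{P}=\mathcal{S}$, where $\mathcal{P}=\{x\in\mathcal{S}: D(xa)=D(x)a+f(x)\delta(a)\text{ for all }a\in\mathcal{S}\}$ is the set on which $D$ already behaves as a $(\delta,f)$-derivation; equivalently, writing $x^{y}=D(xy)-D(x)y-f(x)\delta(y)$ as in Theorem \ref{teo3.3.1}, the claim is that $x^{y}=0$ for all $x,y\in\mathcal{S}$. The plan is to show that this obstruction $x^{y}$ is annihilated on the right by enough commutators to force it into a subbimodule $T(u)$ of the form in Lemma \ref{lemma3.3.10}, and then to kill it using Lemma \ref{lemma3.3.11}.

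First I would dispose of the commutative case: if $\mathcal{S}$ is commutative then $xy=yx$ for all $x,y$, so Lemma \ref{lemma3.3.7} gives $x^{y}=0$ directly and $D$ is a $(\delta,f)$-derivation. Hence I may assume $Z(\mathcal{S})\ne\mathcal{S}$ and fix a non-central element. The engine for producing members of $\mathcal{P}$ is Lemma \ref{lemma3.3.13}: every square-zero element lies in $\mathcal{P}$; and Lemma \ref{lemma3.3.12} enlarges $\mathcal{P}$, since any $v$ commuting with a fixed non-central $u\in\mathcal{P}$ again lies in $\mathcal{P}$. With a supply of elements of $\mathcal{P}$ in hand, Lemma \ref{lemma3.3.14} supplies the crucial annihilation $x^{y}(zw-wz)=0$ for all $x,y\in\mathcal{S}$ and all $z,w\in\mathcal{P}$.

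The final step is to convert these relations into the single statement $x^{y}(ua-au)=0$ for all $a\in\mathcal{S}$ and a suitable non-central $u$; this says precisely that $x^{y}\in T(u)$, and since $u\notin Z(\mathcal{S})$ Lemma \ref{lemma3.3.11} forces $T(u)=0$, whence $x^{y}=0$. The main obstacle is exactly this conversion in the non-commutative prime setting: one must guarantee a non-central element of $\mathcal{P}$ and show that the commutators $[z,w]$ with $z,w\in\mathcal{P}$ coming out of Lemma \ref{lemma3.3.14} sweep out enough of the commutators $ua-au$, $a\in\mathcal{S}$, to place $x^{y}$ in one $T(u)$. I expect this to split according to whether $\mathcal{S}$ possesses a non-zero square-zero element: when it does, the square-zero elements of Lemma \ref{lemma3.3.13} together with the centralizer trick of Lemma \ref{lemma3.3.12} generate the needed non-central members of $\mathcal{P}$; the complementary case, in which $\mathcal{S}$ is reduced and hence (being prime) a domain, has to be handled directly from Lemma \ref{lemma3.3.5} and the joint primeness of $\mathcal{M}$.
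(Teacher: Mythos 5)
Your outline reproduces the paper's architecture almost exactly: reduce the theorem to $x^y=0$ for all $x,y\in\mathcal{S}$, seed $\mathcal{P}$ with square-zero elements via Lemma \ref{lemma3.3.13}, use Lemma \ref{lemma3.3.14} to make $x^y$ annihilate commutators of elements of $\mathcal{P}$, place $x^y$ in a set $T(u)$ and kill it by Lemma \ref{lemma3.3.11}, treat central elements by Lemma \ref{lemma3.3.7}, and finish with $x^y=-y^x$ from Theorem \ref{teo3.3.1}. The problem is that the decisive step --- converting the relations $x^y(zw-wz)=0$, $z,w\in\mathcal{P}$, into $x^y\in T(u)$ for a suitable non-central $u$ --- is precisely the step you leave as an ``expectation'' and an ``obstacle,'' and it is the entire content of the theorem: everything before it is already contained verbatim in the quoted lemmas. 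Naming the hard step is not the same as carrying it out, so as a proof the proposal is incomplete. For the record, the paper executes this step by taking square-zero $s,q\in\mathcal{P}$, obtaining $y^x(qs-sq)=0$ from Lemma \ref{lemma3.3.14}, then specializing $q=y$ to conclude $y^x\in T(y)$, applying Lemma \ref{lemma3.3.11} when $y\notin Z(\mathcal{S})$ and Lemma \ref{lemma3.3.7} when $y\in Z(\mathcal{S})$. Your suspicion that this conversion is delicate is well founded: the substitution $q=y$ is licensed by Lemma \ref{lemma3.3.14} only when $y$ itself lies in $\mathcal{P}$, and membership in $T(y)$ requires $y^x(ya-ay)=0$ for \emph{all} $a\in\mathcal{S}$, not merely for square-zero $s$; the paper compresses exactly the point you flag. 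But your proposal supplies no argument there either.

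The secondary gap is the reduced case, which you correctly isolate but then dispatch with a gesture. If $\mathcal{S}$ has no nonzero square-zero elements (a prime reduced algebra, in particular a possibly non-commutative domain), then Lemma \ref{lemma3.3.13} yields only $0\in\mathcal{P}$ and Lemma \ref{lemma3.3.14} is vacuous, so the whole $\mathcal{P}$-machinery produces nothing; note the paper's proof silently presupposes a nonzero square-zero $s$ and does not treat this case at all. Your fallback ``directly from Lemma \ref{lemma3.3.5} and the joint primeness of $\mathcal{M}$'' does not work as stated: Lemma \ref{lemma3.3.5} gives $x^y(xy-yx)=0$ in which the module element and the commutator involve the \emph{same} pair $x,y$, whereas joint primeness applies to products of the form $I\mathcal{N}J$ with a left ideal $I$ and right ideal $J$; to use it you would first need a relation of the shape $x^y\,r\,(zw-wz)=0$ for arbitrary $r\in\mathcal{S}$, i.e.\ a Bre\v{s}ar-type strengthening of Lemma \ref{lemma3.3.6}, and no such relation is available among the paper's lemmas or in your sketch. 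So the proposal correctly maps the paper's route and even diagnoses its thinnest joints, but at both of them it records the difficulty instead of resolving it, which is a genuine gap rather than an alternative proof.
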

\begin{proof}
    Let any $s \in \mathcal{S}$ such that $s^2 =0$. Based on Lemma \ref{lemma3.3.12}, we obtain $s \in \mathcal{P}$. Let $q \in \mathcal{S}$ such that $q^2 =0$, then we obtain $q \in \mathcal{P}$. Next, based on Lemma \ref{lemma3.3.13},
    \begin{align*}
        y^x (qs-sq) &= 0.
    \end{align*}
    Let $q=y$, so $y^x(ys-sy)=0$. Therefore $y^x \in T(y)$. If $y \notin Z(\mathcal{S})$, then $T(y)=0$. So $y^x=D(yx)-D(y)x-f(y)\delta(x)=0$. If $y \in Z(\mathcal{S})$, then $yx=xy$ for all $x \in \mathcal{S}$. Based Lemma \ref{lemma3.3.7}, we have $y^x=D(yx)-D(y)x-f(y)\delta(x)=0$. It can be concluded that $y^x =0$. It is noted that
    \begin{align*}
        &y^x = -x^y\\
        &D(xy)-D(x)y-f(x)\delta(y)= 0\\
        &D(xy)=D(x)y+f(x)\delta(y).
    \end{align*}
    Thus, it is proven that every $D$ as a Jordan $(\delta,f)$-derivation on $\mathcal{M}$ is a $(\delta,f)$-derivation on $\mathcal{M}$.
\end{proof}

\section*{Acknowledgements}

The authors would like to express their sincere gratitude to the anonymous reviewers for their insightful comments and valuable suggestions, which have significantly improved the quality and clarity of this manuscript. Their constructive feedback helped strengthen the arguments and address important aspects of the research.

\section*{ORCID}

\noindent Indah Emilia Wijayanti - \url{https://orcid.org/0000-0003-0390-8682}

\end{document}